\definecolor{darkgreen}{rgb}{0,0.35,0}
\newtcolorbox{mybox}[2][]{width=10cm,colback = red!5!white, colframe = green!75!black, fonttitle = \bfseries,colbacktitle = red!55!yellow, enhanced,attach boxed title to top left={yshift=-2mm},	title=#2,#1}
\theoremstyle{plain} 
\newtheorem{thm}{{Theorem}\hspace{0.05pt}}[section]
\newtheorem{prop}[thm]{{Proposition}\hspace{0.05pt}}
\newtheorem{cor}[thm]{Corollary\hspace{0.05pt}}
\newtheorem{lem}[thm]{{Lemma}\hspace{0.05pt}}
\theoremstyle{definition}
\newtheorem{dfn}[thm]{{Definition}\hspace{0.05pt}}
\newtheorem{cons}[thm]{{Construction}\hspace{0.05pt}}
\newtheorem{dfn/prop}[thm]{{Definition/Proposition}\hspace{0.05pt}}
\newtheorem{rem}[thm]{{Remark}\hspace{0.05pt}}
\newtheorem*{ack}{{Acknowledgments}\hspace{0.05pt}}
\newcommand{\B}{\mathscr{B}}
\newcommand{\C}{\mathscr{C}}
\newcommand{\D}{\mathscr{D}}
\newcommand{\An}{\mathrm{An}}
\newcommand{\Fun}{\mathrm{Fun}}
\newcommand{\Shv}{\mathrm{Shv}}
\newcommand{\RTop}{\mathrm{RTop}}
\newcommand{\LTop}{\mathrm{LTop}}
\newcommand{\X}{\mathscr{X}}
\newcommand{\Y}{\mathscr{Y}}
\newcommand{\LC}{\mathrm{LC}}
\newcommand{\Cons}{\mathrm{Cons}}
\newcommand{\Cat}{\mathrm{Cat}}
\begin{document}
	\small 
	
	\title{An internal description of constructible objects in an $\infty$-topos}
	\date{\today}
	
	\author{Li He}
	\address{Graduate School of Mathematics, Nagoya University, 464-8602}
	\email{m21015y@math.nagoya-u.ac.jp}

	\maketitle
	
	\begin{abstract}
		We give an internal description of constructible objects in an $\infty$-topos. More precisely, 
		$P$-consctructible objects are locally constant objects internal to $\Fun(P,\An)$,
		for any noetherian poset $P$.
	\end{abstract}
	\tableofcontents
	\section{Introduction}
	The study of constructible sheaves has a very long history.
	By definition, constructible sheaves are built from
	locally constant sheaves. In this paper, we will provide
	a new point of view that constructible sheaves coincide with locally constant sheaves in the internal world.

	The $\infty$-category of constructible sheaves can help us to understand the exodromy phenomenon.
	In \cite[Appendix A]{HA},
	Lurie studied constructible sheaves and exit paths
	under the conical assumption.
	Later, in \cite[Section 3]{CJ23}, Clausen-Jansen
	used the atomic generation assumption
	on the $\infty$-category of constructible sheaves to study the exodromy,
	making the conical assumption go away.
	And Haine-Porta-Teyssier further studied
	the exodromy beyond conicality in more general setup
	in \cite{HPT24}.
	Now, in this paper,
	we will provide an internal description of the $\infty$-category
	of constructible sheaves,
	which should be helpful to understand the exodromy phenomenon.

	We let $\X$ be an $\infty$-topos.
	Recall from \cite[Appendix A.1]{HA} that
	an object $X\in \X$ is \textit{constant}, if
	$X$ lies in the essential image of the constant functor
	$\pi^*:\An\to \X$.
	An object $X\in \X$ is \textit{locally constant},
	if there exists an effective epimorphism
	$\sqcup_i U_i \twoheadrightarrow 1_\X$,
	such that 
	$(X\times U_i \to U_i)\in \X_{/U_i}$ is constant, for each $i$.
	Let $\LC(\X)\subset \X$ be the full
	subcategory of $\X$ spanned by the locally constant objects
	in $\X$.
	
	We let $\X$ be an $\infty$-topos and $P$ a poset.
	Haine-Porta-Teyssier defines the concept of
	$P$-\textit{stratification} of the $\infty$-topos $\X$
	in \cite[2.1.4]{HPT24}:
	A $P$-\textit{stratification} of $\X$
	is a geometric morphism
	$s_*:\X\to \Fun(P,\An)$.
	
	For each $p\in P$,
	Haine-Porta-Teyssier constructs the $p$-th stratum
	$\X_p$ of $s_*:\X\to \Fun(P,\An)$ via the cartesian square
	\begin{center}
		\begin{tikzcd}
			\X_p \ar[r,"i_{p*}"]\ar[d] & \X \ar[d]\\
			\An\simeq \Fun(\{p\},\An) \ar[r,"i_{p*}"] 
			& \Fun(P,\An)
		\end{tikzcd}
	\end{center}
	in $\RTop$.
	An object $X\in \X$ is $P$-\textit{constructible},
	if for each $p\in P$,
	$i_p^*(X)\in \X_p$ is locally constant.
	Let $\Cons_P(\X)\subset \X$ be the full
	subcategory of $\X$ spanned by the $P$-constructible objects
	in $\X$.
	Formally speaking, the $\infty$-category
	$\Cons_P(\X)$ of $P$-constructible objects in $\X$
	is defined by the cartesian square
	\begin{center}
		\begin{tikzcd}
			\Cons_P(\X)\ar[r]\ar[d,hook]
			& \prod_{p\in P} \LC(\X_p)\ar[d,hook]\\
			\X \ar[r] &
			\prod_{p\in P} \X_p
		\end{tikzcd}
	\end{center}
	in $\Cat_\infty$.
	
	In this paper, we define the concept of local constancy
	internal to an $\infty$-topos $\B$:
	\begin{dfn}
		We fix a geometric morphism $\pi^*:\B \to \X$ of $\infty$-topoi. An object $X\in \X$ is $\B$-\textit{locally constant}
		or \textit{locally constant internal to} $\B$,
		if there exists an effective epimorphism
		$\sqcup_i U_i \twoheadrightarrow 1_\X$,
		such that 
		$(X\times U_i \to U_i)\in \X_{/U_i}$
		lies in the essential image of the functor
		$\B \stackrel{\pi^*}{\to} \X \to \X_{/U_i}$.
		Let $\LC_\B(\X)\subset \X$ be the full subcategory spanned by
		the $\B$-locally constant objects in $\X$.
	\end{dfn}
	It's easy to get the commutative square
	\begin{center}
		\begin{tikzcd}
			\LC_\B(\X)\ar[r]\ar[d,hook]
			& \prod_{p\in P} \LC(\X_p)\ar[d,hook]\\
			\X \ar[r] &
			\prod_{p\in P} \X_p
		\end{tikzcd}
	\end{center}
	in $\Cat_\infty$.
	And hence we can get the canonical functor
	$\LC_\B(\X)\to \Cons_P(\X)$.
	Our main result says that if
	$\B\simeq \Fun(P,\An)$, where $P$ is a noetherian poset,
	then this canonical functor is an equivalence.
	\begin{thm} \label{LC_B simeq Cons_P}
		Suppose we are given a $P$-stratification
		$s_*:\X \to \Fun(P,\An)$.
		If $P$ is a noetherian poset,
		then there is a canonical equivalence
		$$\LC_{\Fun(P,\An)}  (\X)\stackrel{\sim}{\to} \Cons_P(\X). $$
	\end{thm}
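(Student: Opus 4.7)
The canonical functor $\LC_\B(\X) \to \Cons_P(\X)$ is automatically fully faithful: both are full subcategories of $\X$ (the latter because it is the pullback of the fully faithful inclusion $\prod_{p} \LC(\X_p) \hookrightarrow \prod_{p} \X_p$ along $\X \to \prod_p \X_p$), and the functor is induced by the identity on $\X$ via the universal property of the pullback defining $\Cons_P(\X)$. The theorem therefore reduces to essential surjectivity, i.e.\ to the assertion that every $P$-constructible object of $\X$ is $\Fun(P,\An)$-locally constant.

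For essential surjectivity I would use noetherian induction on $P$. Since $P$ is noetherian, one may pick a maximal element $p_0 \in P$, and the complement $P' = P \setminus \{p_0\}$ is again noetherian. Given $X \in \Cons_P(\X)$, the restriction $i_{p_0}^* X$ to the stratum $\X_{p_0}$ is locally constant, yielding an effective epi $V \twoheadrightarrow 1_{\X_{p_0}}$ and an object $Y \in \An$ with $i_{p_0}^* X \times V \simeq \pi_{p_0}^*(Y) \times V$ in $(\X_{p_0})_{/V}$; on the complementary subtopos (which inherits a $P'$-stratification from $s_*$), $X$ is $P'$-constructible, and by the inductive hypothesis it is $\Fun(P',\An)$-locally constant. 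The plan is to lift and assemble these two pieces of data into a single effective epi $\sqcup_i U_i \twoheadrightarrow 1_\X$ in $\X$ together with objects $F_i \in \Fun(P,\An)$ satisfying $X \times U_i \simeq s^*(F_i) \times U_i$ in $\X_{/U_i}$.

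The main obstacle is this assembly step: one must construct the transition data in each $F_i$, with value $Y$ at $p_0$ and values on $P'$ given by induction, and verify that $s^*(F_i)$ recovers $X \times U_i$. I expect this to work via the recollement on $\Fun(P,\An)$ corresponding to the decomposition $\{p_0\} \sqcup P'$, combined with the compatibility of $s^*$ with $i_{p_0 *}$ and $i_{p_0}^*$; the specialization map relating the two trivializations is canonically furnished by the geometric morphism $s_*$ itself. A secondary subtlety, relevant when $P$ is infinite, concerns the transfinite passage through the induction and the preservation of the effective-epi property at limit stages; this is precisely where the noetherian hypothesis on $P$ is essential, as it excludes the pathological infinite configurations that would obstruct such a gluing argument.
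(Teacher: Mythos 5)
Your reduction to essential surjectivity is fine, but the induction scheme you propose does not work for infinite noetherian posets, and this is not a ``secondary subtlety'': it is the crux. Removing a single maximal element $p_0$ and invoking the statement for $P'=P\setminus\{p_0\}$ is an induction on the class of noetherian posets for which no well-founded measure decreases; already for $P=\mathbb{N}^{\op}$, or for a $P$ with infinitely many maximal elements, the process never terminates and there is no meaningful limit stage at which to resume. The noetherian hypothesis says that $>$ is well-founded, so the legitimate induction proves a statement $\Phi(p)$ indexed by $p\in P$ assuming $\Phi(q)$ for all $q>p$. That is what the paper does: the inductive statement (Corollary \ref{the case P_{geq p}}) concerns the cone $\Fun(P_{\geq p},\An)$, and the recollement used is $P_{\geq p}=\{p\}\sqcup\bigcup_{q>p}P_{\geq q}$ with the single point $\{p\}$ \emph{closed} (it is minimal in $P_{\geq p}$) and the union of the strictly larger cones open --- the opposite of your decomposition, in which a maximal $\{p_0\}$ is open and $P'$ is closed.

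Two further ingredients that your sketch elides are where the real work lies. First, the assembly step across a closed--open recollement $(\mathscr{Z},\mathscr{U})$ is Lemma \ref{loc-const-check-locally}: given trivializing covers $\sqcup_\alpha U_\alpha\twoheadrightarrow 1$ on the closed part and $\sqcup_\beta V_\beta\twoheadrightarrow 1$ on the open part, the cover $\sqcup_{\alpha,\beta}(i_*U_\alpha\times j_!V_\beta)\twoheadrightarrow 1_\X$ together with the objects $i_*Z_\alpha\times j_!P_\beta\in\B$ does the job; no transition data has to be constructed by hand, because $i^*$ and $j^*$ are jointly conservative on the relevant slice. Second, and more seriously, the open complement $\bigcup_{q>p}P_{\geq q}$ is not itself of the form $P_{\geq q}$, so the inductive hypothesis applies only to the pieces of its \'etale cover by the cones $P_{\geq q}$; one must therefore know that internal local constancy descends along such covers, i.e.\ that $\LC_\B(\X)\to\lim_i\LC_{\B_i}(\X_i)$ is an equivalence when $\B\simeq\lim_i\B_i$ is an \'etale cover (Proposition \ref{connected case}, together with the reduction to connected $P$ in Corollary \ref{reduce to connected case}). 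This descent statement occupies most of the paper and has no counterpart in your outline, so as it stands the proposal is missing both a valid induction and its two key technical inputs.
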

	From this theorem, we know that the $\infty$-category 
	$\Cons_P(\X)$ of
	constructible sheaves are essentially the $\infty$-category
	$\LC_{\Fun(P,\An)}(\X)$ of locally constant sheaves internal to 
	$\Fun(P,\An)$.

	\begin{ack}
		The author would like to thank Lars Hesselholt 
		for some helpful suggestions.
	\end{ack}

	\section{Local constancy internal to an $\infty$-topos}
	\begin{dfn}
		We fix a geometric morphism $\pi^*:\B \to \X$ of $\infty$-topoi. An object $X\in \X$ is $\B$-\textit{locally constant}
		or \textit{locally constant internal to} $\B$,,
		if there exists an effective epimorphism
		$\sqcup_i U_i \twoheadrightarrow 1_\X$,
		such that 
		$(X\times U_i \to U_i)\in \X_{/U_i}$
		lies in the essential image of the functor
		$\B \stackrel{\pi^*}{\to} \X \to \X_{/U_i}$.
		Let $\LC_\B(\X)\subset \X$ be the full subcategory spanned by
		the $\B$-locally constant objects in $\X$.
	\end{dfn}

	\begin{rem}
		\begin{itemize}
			\item [(1)]
			If $f^*:\Y \to \X$ is a geometric morphism
			of  $\infty$-topoi in $\LTop_{\B/}$,
			then $f^*$ sends $\B$-locally constant objects
			in $\Y$ to $\B$-locally constant objects in $\X$.
			Thus it induces a functor
			$f^*:\LC_\B(\Y)\to \LC_\B(\X)$.
			\item [(2)]  If we are given the geometric morphisms
			$\B_1 \to \B_2 \to \X$ in $\LTop$,
			then 
			$\LC_{\B_1}(\X)\subset \LC_{\B_2}(\X)\subset \X.$
			\item [(3)] We have
			$\LC_\An(\X)=\LC(\X)$
			and $\LC_\X(\X)=\X$.
		\end{itemize} 
	\end{rem}

	\begin{rem}
		Suppose that we are given a commutative square
		\begin{center}
			\begin{tikzcd}
				\B_1 \ar[r]\ar[d]
				& \X_1 \ar[d,"\pi^*_{12}"]\\
				\B_2 \ar[r] & \X_2
			\end{tikzcd}
		\end{center}
		in $\LTop$.
		In this situation, the functor
		$\pi_{12}^*:\X_1\to \X_2$
		induces a functor
		$\pi_{12}^*:\LC_{\B_1}(\X_1)\to \LC_{\B_2}(\X_2)$.
		Indeed, if $X\in \X_1$ is $\B_1$-locally constant,
		then there exists an effective epimorphism
		$\sqcup U_i \twoheadrightarrow 1_{\X_1}$,
		such that $ (X\times U_i \to U_i)\in {\X_1}_{/U_i}$ is $\B_1$-constant. Hence the commutative diagram
		\begin{center}
			\begin{tikzcd}
				\B_1 \ar[r]\ar[d]
				& \X_1 \ar[d] \ar[r]  & {\X_1}_{/U_i} \ar[d] \\
				\B_2 \ar[r] & \X_2  \ar[r]
				& {\X_2}_{/\pi_{12}^* U_i}
			\end{tikzcd}
		\end{center}
		shows that $\pi_{12}^* X \in \X_2$ is $\B_2$-locally constant.
		Thus the given commutative square factors as
		\begin{center}
			\begin{tikzcd}
				\B_1 \ar[r]\ar[d]
				& \LC_{\B_1}(\X_1)\ar[r,hook]\ar[d]
				& \X_1 \ar[d]\\
				\B_2 \ar[r]
				& \LC_{\B_2}(\X_2)\ar[r,hook]
				& \X_2.
			\end{tikzcd}
		\end{center}
	\end{rem}

	Let $\B$ be an $\infty$-topos
	and $U\in \B$ a $(-1)$-truncated object.
	Recall from \cite[B.1.6]{HPT24} that
	if $j_*:\B_{/U}\to \B$ is the open geometric morphism
	and $i_*:\B_{\setminus U}\to \B$ is the complementary closed geometric morphism, then the functors
	$i^*:\B\to \B_{\setminus U} $
	and $j^*:\B\to \B_{/U}$
	exhibit $\B$ as the recollement of $\B_{\setminus U}$
	and $\B_{/U}$ in the sense of
	\cite[A.8.1]{HA}.

	\begin{cons}
		Let $\B$ be an $\infty$-topos with a closed-open recollement
		$(\mathscr{Z},\mathscr{U}).$
		Given a geometric morphism
		$\pi_*:\X\to \B$, we construct the $\infty$-topoi
		$\X_\mathscr{Z}$ and $\X_\mathscr{U}$
		via the following diagram
		\begin{center}
			\begin{tikzcd}
				\X_\mathscr{Z}\ar[r,hook,"i_*"]\ar[d]
				& \X \ar[d,"\pi_*"] & \X_\mathscr{U}
				\ar[l,hook',"j_*",swap] \ar[d]\\
				\mathscr{Z}\ar[r,hook,"i_*"]
				& \B & \mathscr{U}\ar[l,hook',"j_*",swap],
			\end{tikzcd}
		\end{center}
		with both squares cartesian in $\RTop$.
	\end{cons}

	\begin{rem} \label{recollement}
		By \cite[B.1.8]{HPT24},
		we are able to conclude that the functors 
		$i^*:\X\to \X_\mathscr{Z}$ and $j^*:\X\to \X_\mathscr{U}$
		exhibit $\X$ as the recollement of 
		$\X_\mathscr{Z}$ and $\X_\mathscr{U}$.
		In particular, the functors $i^*$ and $j^*$ are jointly conservative, and we have
		$i^* i_*\simeq \mathrm{id}$, $j^* i_* \simeq *$,
		$j^* j_!\simeq \mathrm{id}$, $i^* j_!\simeq *$.
		Moreover, for any object $A\in \X$, we have
		the cartesian square
		\begin{center}
			\begin{tikzcd}
				A \ar[r]\ar[d]
				& i_* i^* A\ar[d]\\
				j_* j^* A \ar[r]
				& i_* i^* j_* j^* A.
			\end{tikzcd}
		\end{center}
	\end{rem}

	\begin{lem} \label{loc-const-check-locally}
		Fix a geometric morphism $\pi_*:\X\to \B$.
		If $X\in \X$ is such that
		$i^*X\in \X_{\mathscr{Z}}$ is $\mathscr{Z}$-locally constant
		and
		$j^*X\in \X_{\mathscr{U}}$ is $\mathscr{U}$-locally constant,
		then $X\in \X$ is $\mathscr{B}$-locally constant.
		In other words, the square
		\begin{center}
			\begin{tikzcd}
				\LC_\B(\X)\ar[r]\ar[d,hook]
				& \LC_\mathscr{Z}(\X_\mathscr{Z})
				\times \LC_\mathscr{U}(\X_\mathscr{U})
				\ar[d,hook]\\
				\X \ar[r,"(i^*{,} j^*)"]
				& \X_\mathscr{Z}\times \X_\mathscr{U}
			\end{tikzcd}
		\end{center}
		is cartesian in $\Cat_\infty$.
	\end{lem}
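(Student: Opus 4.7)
The strategy is to prove the stronger pointwise statement: an object $X\in\X$ is $\B$-locally constant if and only if $i^*X\in\X_\mathscr{Z}$ is $\mathscr{Z}$-locally constant and $j^*X\in\X_\mathscr{U}$ is $\mathscr{U}$-locally constant. Once this is shown, the displayed square is cartesian in $\Cat_\infty$ automatically, because all three non-top-left corners are (products of) full subcategories of the ambient $\infty$-category. The forward direction is immediate from the functoriality of $\LC_{(-)}(-)$ in commutative squares of $\LTop$ established in the preceding remark, applied to the two squares $(\B,\X)\to(\mathscr{Z},\X_\mathscr{Z})$ and $(\B,\X)\to(\mathscr{U},\X_\mathscr{U})$ obtained by taking left adjoints in the cartesian squares of the construction above.

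For the reverse direction, assume $i^*X$ and $j^*X$ are locally constant and choose witnessing covers $\sqcup_\alpha V_\alpha\twoheadrightarrow 1_{\X_\mathscr{Z}}$ with $i^*X\times V_\alpha\simeq \pi^*_\mathscr{Z}Z_\alpha\times V_\alpha$ over $V_\alpha$ (for some $Z_\alpha\in\mathscr{Z}$) and $\sqcup_\beta W_\beta\twoheadrightarrow 1_{\X_\mathscr{U}}$ with $j^*X\times W_\beta\simeq \pi^*_\mathscr{U}U_\beta\times W_\beta$ over $W_\beta$ (for some $U_\beta\in\mathscr{U}$). The candidate cover of $1_\X$ is
$$\Bigl(\bigsqcup_\alpha i_* V_\alpha\Bigr)\sqcup \Bigl(\bigsqcup_\beta j_! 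W_\beta\Bigr)\longrightarrow 1_\X.$$
Applying $i^*$ yields $(\sqcup_\alpha V_\alpha)\sqcup(\sqcup_\beta *)\twoheadrightarrow 1_{\X_\mathscr{Z}}$, and applying $j^*$ yields $(\sqcup_\alpha *)\sqcup(\sqcup_\beta W_\beta)\twoheadrightarrow 1_{\X_\mathscr{U}}$; by joint conservativity of $(i^*,j^*)$ from Remark~\ref{recollement}, the combined map is an effective epimorphism onto $1_\X$.

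It then remains to trivialize $X$ over each piece. The base-change identities $\pi^*i_*^\B\simeq i_*^\X\pi^*_\mathscr{Z}$ and $\pi^*j_!^\B\simeq j_!^\X\pi^*_\mathscr{U}$, together with the projection formulas $i_*(i^*A\times V)\simeq A\times i_*V$ and $j_!(j^*A\times W)\simeq A\times j_!W$, are all verified routinely by applying $i^*$ and $j^*$ to both sides and invoking the identities of Remark~\ref{recollement}. Combining these with the local triviality of $i^*X$ and $j^*X$ gives
$$X\times i_*V_\alpha \simeq i_*(i^*X\times V_\alpha)\simeq i_*(\pi^*_\mathscr{Z}Z_\alpha\times V_\alpha)\simeq \pi^*(i_*^\B Z_\alpha)\times i_*V_\alpha,$$
and analogously $X\times j_!W_\beta\simeq \pi^*(j_!^\B U_\beta)\times j_!W_\beta$. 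Hence $X\times C\to C$ lies in the essential image of $\B\xrightarrow{\pi^*}\X\to\X_{/C}$ for each component $C$ of the cover, so $X$ is $\B$-locally constant. The main technical burden is the bookkeeping of the Beck--Chevalley and projection-formula identities in the recollement, but all of them reduce mechanically to Remark~\ref{recollement} via joint conservativity.
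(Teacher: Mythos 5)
Your reduction to the pointwise statement is fine, your cover is an effective epimorphism, and the trivialization over the pieces $j_!W_\beta$ does go through (the projection formula for the \'etale morphism $j_!$ is valid, and $\X_{/j_!W_\beta}\simeq(\X_\mathscr{U})_{/W_\beta}$ only sees the open part). The gap is in the trivialization over the pieces $i_*V_\alpha$. The ``projection formula'' $i_*(i^*A\times V)\simeq A\times i_*V$ for the closed immersion is false: taking $V=1_{\X_\mathscr{Z}}$ it would assert $i_*i^*A\simeq A$ for every $A$, and the ``routine verification'' you invoke already fails when you apply $j^*$, since $j^*i_*(i^*A\times V)\simeq *$ while $j^*(A\times i_*V)\simeq j^*A\times j^*i_*V\simeq j^*A$, which need not be terminal. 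So the first equivalence in your chain $X\times i_*V_\alpha\simeq i_*(i^*X\times V_\alpha)\simeq\cdots$ does not hold, and the conclusion you draw from it is genuinely false, not merely unproved: since $j^*i_*V_\alpha\simeq 1_{\X_\mathscr{U}}$, the slice $\X_{/i_*V_\alpha}$ still contains the entire open stratum, so requiring $X\times i_*V_\alpha\to i_*V_\alpha$ to lie in the image of $\B\to\X_{/i_*V_\alpha}$ forces $j^*X$ to be \emph{globally} $\mathscr{U}$-constant, which is strictly stronger than the hypothesis that $j^*X$ is $\mathscr{U}$-locally constant. A disjoint union of purely closed and purely open pieces cannot work.

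This is exactly the point where the paper's argument diverges from yours: it covers $1_\X$ by the \emph{products} $W_{\alpha\beta}=i_*U_\alpha\times j_!V_\beta$, refining in the closed and open directions simultaneously, takes $i_*Z_\alpha\times j_!P_\beta\in\B$ as the trivializing object, and verifies the identification in $\X_{/W_{\alpha\beta}}$ by applying $\Tilde{i}^*$ and $\Tilde{j}^*$ and using joint conservativity. If you want to repair your argument you should adopt that cover. Be aware, however, that the verification that $\sqcup_{\alpha,\beta}W_{\alpha\beta}\to 1_\X$ is an effective epimorphism leans on the identity $i^*j_!\simeq *$ quoted in Remark~\ref{recollement}; you should check that identity independently before relying on it (in $\Fun(\{0<1\},\An)$ one computes $i^*j_!A\simeq\emptyset$, the initial object, rather than the terminal one), since the two arguments stand or fall at different recollement identities.
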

	\begin{proof}
		Since $i^*X\in \X_\mathscr{Z}$
		is $\mathscr{Z}$-locally constant,
		by definition, there exists an effective epimorphism
		$\sqcup_\alpha U_\alpha \twoheadrightarrow 
		1_{\X_\mathscr{Z}}$
		such that
		$(i^* X\times U_\alpha \to U_\alpha) \in 
		(\X_{\mathscr{Z}})_{/U_\alpha}$
		lies in the essential image
		of the functor
		$\mathscr{Z}\to \mathscr{X}_\mathscr{Z}\to 
		(\X_{\mathscr{Z}})_{/U_\alpha}$.
		Similarly, 
		since $j^*X\in \X_\mathscr{U}$
		is $\mathscr{U}$-locally constant,
		there exists an effective epimorphism
		$\sqcup_\beta V_\beta \twoheadrightarrow 
		1_{\X_\mathscr{U}}$
		such that
		$(j^* X\times V_\beta \to V_\beta) \in 
		(\X_{\mathscr{U}})_{/V_\beta}$
		lies in the essential image
		of the functor $\mathscr{U}\to \mathscr{X}_\mathscr{U}\to 
		(\X_{\mathscr{U}})_{/V_\beta}$.
		
		Consider the map
		$ \sqcup_{\alpha,\beta}
		(i_* U_\alpha \times j_! V_\beta)\to 1_\X $.
		Because by Remark \ref{recollement}, 
		$i^* i_*\simeq \mathrm{id}$, $j^* i_* \simeq *$,
		$j^* j_!\simeq \mathrm{id}$, $i^* j_!\simeq *$,
		and together with the two effective epimorphisms
		$ \sqcup_\alpha U_\alpha  \twoheadrightarrow 
		1_{\X_\mathscr{Z}}$
		and
		$\sqcup_\beta V_\beta \twoheadrightarrow 1_{\X_\mathscr{U}}$,
		we see that
		$$i^*(  \sqcup_{\alpha,\beta}
		(i_* U_\alpha \times j_! V_\beta)\to 1_\X )
		\simeq
		( \sqcup_\beta (\sqcup_\alpha U_\alpha )\twoheadrightarrow 
		1_{\X_\mathscr{Z}})$$
		and
		$$
		j^*(   \sqcup_{\alpha,\beta}
		(i_* U_\alpha \times j_! V_\beta)\to 1_\X )
		\simeq ( \sqcup_\alpha( \sqcup_\beta V_\beta )
		\twoheadrightarrow 1_{\X_\mathscr{U}} ) $$ 
		are effective epimorphisms.
		Since the functors $i^*$ and $j^*$ are jointly conservative,
		by \cite[A.4.2.1]{SAG}, 
		the map
		$ \sqcup_{\alpha,\beta}
		(i_* U_\alpha \times j_! V_\beta)\twoheadrightarrow 1_\X $
		is also an effective epimorphism.
		We let $W_{\alpha \beta}:=i_* U_\alpha \times j_! V_\beta$.
		Since $i^* W_{\alpha \beta } \simeq U_\alpha$,
		we get the commutative diagram
		\begin{center}
			\begin{tikzcd}
				\B \ar[r,"\pi^*"]\ar[d,"i^*"]
				& \X \ar[r]\ar[d,"i^*"]  
				& \X_{/W_{\alpha \beta} } 
				\ar[d,"\Tilde{i}^*"]\\
				\mathscr{Z} \ar[r,"\pi^*_\mathscr{Z}"]
				& \mathscr{X}_\mathscr{Z}\ar[r]
				& (\mathscr{X}_\mathscr{Z})_{/U_\alpha}.
			\end{tikzcd}
		\end{center}
		
		By definition, there exists some $Z_\alpha\in \mathscr{Z}$
		such that
		$ (\pi^*_\mathscr{Z}Z_\alpha \times U_\alpha\to U_\alpha)
		\simeq (i^* X \times U_\alpha \to U_\alpha )$
		in $(\mathscr{X}_\mathscr{Z})_{/U_\alpha}$.
		Similarly, there exists some $P_\beta\in \mathscr{U}$
		such that
		$ (\pi^*_\mathscr{U}P_\beta \times V_\beta\to  V_\beta)
		\simeq (i^* X\times V_\beta\to  V_\beta )$
		in $(\mathscr{X}_\mathscr{U})_{/V_\beta }$.
		We claim that 
		there is an equivalence
		$$
		( \pi^* (i_* Z_\alpha \times j_! P_\beta)\times
		W_{\alpha\beta}\to W_{\alpha\beta} )
		\simeq
		(X\times W_{\alpha\beta}\to W_{\alpha\beta})
		\in \X_{/W_{\alpha\beta}},
		$$
		which finishes the proof.
		Because
		$ i^* \pi^* \simeq \pi_{\mathscr{Z}}^* i^*$
		and
		$
		(i^*X\times \times U_\alpha
		\to  U_\alpha)\simeq
		(\pi^*_\mathscr{Z}Z_\alpha \times U_\alpha
		\to  U_\alpha)\in (\X)_{U_\alpha }$,
		we get the equivalences
		\begin{align*}
			\Tilde{i}^*( \pi^* (i_* Z_\alpha \times j_! P_\beta)\times
			W_{\alpha\beta}\to W_{\alpha\beta} )
			&\simeq ( i^*\pi^* (i_* Z_\alpha \times j_! P_\beta)\times
			U_\alpha\to U_\alpha)\\
			\simeq  ( \pi^*_\mathscr{Z}Z_\alpha \times U_\alpha
			\to  U_\alpha)
			&\simeq \Tilde{i}^*( X\times W_{\alpha\beta}
			\to W_{\alpha\beta} ) .
		\end{align*}
		Similarly, we have
		$\Tilde{j}^*( \pi^* (i_* Z_\alpha \times j_! P_\beta)\times
		W_{\alpha\beta}\to W_{\alpha\beta} )
		\simeq
		\Tilde{j}^*( X\times W_{\alpha\beta}
		\to W_{\alpha\beta} )$.
		By Remark \ref{recollement}, 
		we get an equivalence
		$$ ( \pi^* (i_* Z_\alpha \times j_! P_\beta)\times
		W_{\alpha\beta}\to W_{\alpha\beta} )
		\simeq  ( X\times W_{\alpha\beta}
		\to W_{\alpha\beta} )\in \X_{/W_{\alpha\beta}} .$$
	\end{proof}

	\section{Proof of the main result}
	In this section,
	we prove our main result.
	
	We start from an useful observation.
	Let $f^*:\X\to \Y$ be a geometric morphism
	and $U$ an object in $\X$. By \cite[6.3.5.8]{HTT},
	there is a pushout square
	\begin{center}
		\begin{tikzcd}
			\X \ar[r,"f^*"]\ar[d,"\pi^*"swap]
			& \Y \ar[d,"\varphi^*"]\\
			\X_{/U}\ar[r,"F^*"]
			& \Y_{/f^* U}
		\end{tikzcd}
	\end{center}
	in $\LTop$. We have:
	\begin{lem} \label{left adjointable etale case}
		The above square is vertically left adjointable.   
		In other words, the canonical map
		$ \varphi_! F^* \to f^* \pi_!$
		is an equivalence.
	\end{lem}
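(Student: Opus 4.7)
The plan is to compute the Beck--Chevalley natural transformation $\varphi_! F^* \to f^* \pi_!$ directly, exploiting the fact that $\pi^*$ and $\varphi^*$ are étale. First I recall the explicit descriptions: $\pi^*:\X\to\X_{/U}$ sends $Z$ to $(Z\times U\to U)$ and admits a left adjoint $\pi_!:(V\to U)\mapsto V$ given by forgetting the structure map; similarly for $\varphi^*$ and $\varphi_!$. The functor $F^*$ arising from the pushout square sends $(V\to U)$ to $(f^*V\to f^*U)$, and the equivalence $F^*\pi^*\simeq\varphi^*f^*$ witnessing commutativity of the square is induced by the fact that $f^*$ preserves finite products.

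Given these descriptions, the Beck--Chevalley transformation is obtained as the mate
\[ \varphi_! F^* \xrightarrow{\varphi_! F^* \eta} \varphi_! F^* \pi^* \pi_! \simeq \varphi_! \varphi^* f^* \pi_! \xrightarrow{\epsilon f^* \pi_!} f^* \pi_!, \]
where $\eta$ is the unit of $\pi_!\dashv\pi^*$ and $\epsilon$ is the counit of $\varphi_!\dashv\varphi^*$. I would then evaluate this composite on an arbitrary object $(V\to U)\in\X_{/U}$: both the source $\varphi_! F^*(V\to U)=f^*V$ and the target $f^*\pi_!(V\to U)=f^*V$ agree. Unwinding the unit $\eta_{(V\to U)}:(V\to U)\to(V\times U\to U)$ as the graph map $(\mathrm{id}_V,p)$ (where $p:V\to U$ is the structure map), applying $F^*$ and $\varphi_!$ yields the map $(\mathrm{id},f^*p):f^*V\to f^*V\times f^*U$; composing with the counit $\epsilon_{f^*V}:f^*V\times f^*U\to f^*V$, which is the projection onto the first factor, one recovers the identity on $f^*V$. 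Hence the Beck--Chevalley map is pointwise an equivalence.

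The main technical point to be careful about is verifying that the coherent equivalence $F^*\pi^*\simeq\varphi^*f^*$ supplied by the pushout structure in $\LTop$ of \cite[6.3.5.8]{HTT} is identified with the canonical product-preserving equivalence $(f^*Z\times f^*U\to f^*U)\simeq(f^*(Z\times U)\to f^*U)$ used above. This is essentially guaranteed by the universal property of the pushout, but needs to be spelled out for the mate computation to literally collapse to the identity; once this compatibility is in place, the triangle identities for $\pi_!\dashv\pi^*$ and $\varphi_!\dashv\varphi^*$ close the argument.
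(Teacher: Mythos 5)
Your proposal is correct and follows essentially the same route as the paper, which also proves the lemma by direct computation on an object $(V\to U)\in\X_{/U}$, observing that both $\varphi_!F^*(V\to U)$ and $f^*\pi_!(V\to U)$ are $f^*V$. In fact your version is the more careful one: the paper only notes that source and target agree objectwise, whereas you actually trace the Beck--Chevalley mate through the unit and counit and check that it collapses to the identity on $f^*V$, which is the step that genuinely justifies calling the canonical map an equivalence.
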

	\begin{proof}
		By the direct computation.
		For any $(V\to U)\in \X_{/U}$,
		we have
		$\varphi_! F^*(V\to U) 
		\simeq \varphi_!( f^*V\to f^*U )
		\simeq f^*V\simeq f^* \pi_!(V\to U)$.
	\end{proof}

	\begin{rem}
		Given a family of geometric morphisms
		$f_i^*:\X\to \X_i$ in $\LTop$, 
		we have the functor $F= (f_i^*)_i: \X\to \prod_i \X_i$,
		and its right adjoint
		$G$ is given by
		$$
		G:\prod_i \X_i \to \X;
		(x_i)_i \mapsto \prod_i f_{i*}(x_i).
		$$
	\end{rem}

	\begin{rem} \label{check after apply to f_i^*}
		Given a family of geometric morphisms
		$f_i^*:\X\to \X_i$ in $\LTop$,
		such that the induced functor
		$F=\{f_i^*\}_i:\X \to \lim_i \X_i$ is an equivalence.
		If $A$ and $B$ in $\X$ such that there is an equivalence
		$f_i^* A\simeq f_i^* B\in \X_i$, for each $i$,
		then there is an equivalence $A\simeq B$.
		Indeed, since the right adjoint of $F$
		is given by $G:\lim_i \X_i \to \X; \{x_i\}\mapsto \lim_i f_{i*}(x_i) $, we have
		$A\simeq \lim_i f_{i*} f_i^* A \simeq
		\lim_i f_{i*} f_i^* B\simeq B$.
	\end{rem}

	\begin{prop} \label{prod of B-loc const}
		The canonical functor
		$ \LC_{\prod_i \B_i}(\prod_i \X_i )
		\to \prod_i \LC_{\B_i}(\X_i)$
		is an equivalence.
	\end{prop}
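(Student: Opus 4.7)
The plan is to observe that both $\LC_{\prod_i \B_i}(\prod_i \X_i)$ and $\prod_i \LC_{\B_i}(\X_i)$ sit as full subcategories of $\prod_i \X_i$, and that the canonical functor (obtained by applying the preceding functoriality remark to the commutative squares arising from the projections $\prod_j \B_j \to \B_i$ and $\prod_j \X_j \to \X_i$) lands in $\prod_i \LC_{\B_i}(\X_i)$ and is automatically fully faithful. The entire content of the proposition is therefore essential surjectivity: given $(X_i)_i \in \prod_i \X_i$ with each $X_i$ in $\LC_{\B_i}(\X_i)$, we must show $(X_i)_i \in \LC_{\prod_i \B_i}(\prod_i \X_i)$.

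For this, I would fix for each $i$ a witnessing effective epimorphism $\sqcup_{\alpha \in I_i} U^{i,\alpha} \twoheadrightarrow 1_{\X_i}$ together with objects $Y^{i,\alpha} \in \B_i$ satisfying $\pi_i^*(Y^{i,\alpha}) \times U^{i,\alpha} \simeq X_i \times U^{i,\alpha}$ over $U^{i,\alpha}$. To assemble these into a single witness in the product topos, I would index by tuples: for each $\beta = (\alpha_i)_i \in \prod_i I_i$, set $V^\beta := (U^{i,\alpha_i})_i \in \prod_i \X_i$ and $Z^\beta := (Y^{i,\alpha_i})_i \in \prod_i \B_i$. It then suffices to verify that $\sqcup_\beta V^\beta \twoheadrightarrow 1_{\prod_i \X_i}$ is an effective epimorphism and that $\pi^*(Z^\beta) \times V^\beta \simeq (X_i)_i \times V^\beta$ over $V^\beta$ for each $\beta$.

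Since coproducts and products in $\prod_i \X_i$ are computed componentwise, both claims reduce to componentwise verifications. The product identity is immediate from the choice of the $Y^{i,\alpha}$. For the effective epimorphism, the $i$-th component is $\sqcup_\beta U^{i,\alpha_i}$, which contains $\sqcup_{\alpha \in I_i} U^{i,\alpha}$ as a summand (fix any choice of the remaining coordinates); the original effective epimorphism onto $1_{\X_i}$ thus factors through it, forcing it to be an effective epimorphism. Joint conservativity of the projections, combined with \cite[A.4.2.1]{SAG} (as used in the proof of Lemma \ref{loc-const-check-locally}), then promotes these componentwise effective epimorphisms to an effective epimorphism in $\prod_i \X_i$. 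The only point requiring some care is recognizing that the correct way to bundle the individual covers is by indexing over tuples in $\prod_i I_i$ rather than a disjoint union; once that indexing is chosen, the remaining verification is essentially formal, with no obstacle beyond the componentwise behavior of colimits and products in $\prod_i \X_i$.
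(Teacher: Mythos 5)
Your proposal is correct and matches the paper's proof essentially step for step: the paper also reduces to essential surjectivity, indexes the assembled cover by tuples (sections $\varphi\colon I\to J$, i.e.\ elements of $\prod_i J_i$), forms the objects $\prod_i f_{i*}U_{\alpha_i}$ and $\prod_i \varphi_{i*}A_{\alpha_i}$ whose components are exactly your $V^\beta$ and $Z^\beta$, and verifies both the effective epimorphism (via joint conservativity and \cite[A.4.2.1]{SAG}, noting the $j$-th component contains the original cover as a summand) and the trivialization componentwise. No substantive difference.
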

	\begin{proof}
		Suppose $\B\simeq \prod_i \B_i$
		and $\X\simeq \prod_i \X_i$.
		From the commutative square
		\begin{center}
			\begin{tikzcd}
				\LC_\B(\X)
				\ar[r]\ar[d,hook] 
				& \prod_i \LC_{\B_i}(\X_i)\ar[d,hook]\\
				\X \ar[r,"\sim"] & \prod_i \X_i.
			\end{tikzcd}
		\end{center}
		we conclude that the functor
		$\LC_\B(\X)\hookrightarrow  \prod_i \LC_{\B_i}(\X_i)$
		is fully faithful.
		It remains to show that it is essentially surjective.
		That is, given any
		$(X_i)_i\in \prod_i \X_i$
		such that each $X_i$ is $\B_i$-locally constant,
		the corresponding object
		$X\simeq \prod_i  f_{i*}X_i \in \X $
		is $\B$-locally constant.
		
		Since $X_i \in \X_i$ is $\B_i$-locally constant,
		there exists an effective epimorphism
		$\sqcup_{\alpha_i\in J_i} U_{\alpha_i}
		\twoheadrightarrow  1_{\X_i} $,
		such that
		$(X_i \times U_{\alpha_i}\to U_{\alpha_i} )
		\simeq
		(\pi_i^* A_{\alpha_i} \times U_{\alpha_i}\to U_{\alpha_i} )
		\in (\X_i)_{/U_{\alpha_i}}
		$
		for some $A_{\alpha_i}\in \B_i$,
		where $\pi_i^*:\B_i \to \X_i$.
		
		We claim that the map
		$\sqcup_{\varphi:I \to J}
		\prod_i f_{i*} U_{\alpha_i}
		\to 1_{\X} $
		is an effective epimorphism in $\X$.
		Since the family $\{f_j^*:\X\to \X_j\}_j$
		is jointly conservative,
		by \cite[A.4.2.1]{SAG},
		it suffices to show that for each $j$,
		the map
		$f_j^*( \sqcup_{\varphi:I \to J}
		\prod_i f_{i*} U_{\alpha_i})
		\to f_j^* 1_{\X}\simeq 1_{\X_j} $
		is an effective epimorphism.
		Note that we have
		\begin{align*}
			f_j^*( \sqcup_{\varphi:I \to J}
			\prod_i f_{i*} U_{\alpha_i})
			&\simeq \sqcup_{\varphi:I\to J}
			f_j^*( \prod_i f_{i*} U_{\alpha_i} )
			\simeq
			\sqcup_{\varphi:I\to J}
			f_j^* G(\{ U_{\alpha_i}\}_i )\\
			&\simeq  \sqcup_{\varphi:I\to J}  U_{\alpha_j}
			\simeq (\sqcup_{\alpha_j \in J_j} U_{\alpha_j})
			\sqcup V.
		\end{align*}
		Since $\sqcup_{\alpha_j \in J_j} U_{\alpha_j}
		\twoheadrightarrow 1_{\X_j},
		$
		the map
		$f_j^*( \sqcup_{\varphi:I \to J}
		\prod_i f_{i*} U_{\alpha_i})
		\simeq (\sqcup_{\alpha_j \in J_j} U_{\alpha_j})
		\sqcup V \twoheadrightarrow 1_{\X_j}$ 
		is also an effective epimorphism.
		
		Given functors $\varphi_i^*: \B\to \B_i$,
		we have the object
		$\prod_i \varphi_{i*}A_{\alpha_i}\in \B$.
		We claim that
		$$(\pi^* (\prod_i \varphi_{i*}A_{\alpha_i})
		\times \prod_i f_{i*} U_{\alpha_i} \to 
		\prod_i f_{i*} U_{\alpha_i})
		\simeq (X \times \prod_i f_{i*} U_{\alpha_i} \to 
		\prod_i f_{i*} U_{\alpha_i}
		)\in \X_{ / \prod_i f_{i*} U_{\alpha_i}  },
		$$
		which means that
		$X$ is $\B$-locally constant.
		By Remark \ref{check after apply to f_i^*},
		it suffices to show that for each $j$, we have an equivalence
		\begin{align*}
			&(f_j^* \pi^* (\prod_i \varphi_{i*}A_{\alpha_i})
			\times f_j^*( \prod_i f_{i*} U_{\alpha_i}) \to 
			f_j^*( \prod_i  f_{i*} U_{\alpha_i}) )\\
			&\simeq ( f_j^* X \times
			f_j^*( \prod_i f_{i*} U_{\alpha_i} )\to 
			f_j^*( \prod_i f_{i*} U_{\alpha_i} )
			).
		\end{align*}
		From the commutative square
		\begin{center}
			\begin{tikzcd}
				\B \ar[r,"\pi^*"]\ar[d,"\varphi_j^*"] 
				&  \X \ar[r]\ar[d,"f_j^*"]
				& \X_{/\prod_i f_{i*} U_{\alpha_i}}\ar[d]\\
				\B_j \ar[r,"\pi_j^*"] &  \X_j \ar[r]
				& (\X_j)_{/U_{\alpha_j}},
			\end{tikzcd}
		\end{center}
		we have
		$$
		f_j^* \pi^* (\prod_i \varphi_{i*}A_{\alpha_i})
		\simeq
		\pi_j^* \varphi_j^* (\prod_i \varphi_{i*}A_{\alpha_i})
		\simeq \pi_j^* A_{\alpha_j}.
		$$
		Since
		$f_j^* (\prod_i f_{i*} U_{\alpha_i}) 
		\simeq  U_{\alpha_j} $, we get the equivalence
		$$
		(f_j^*\pi^* (\prod_i \varphi_{i*}A_{\alpha_i})
		\times f_j^* (\prod_i f_{i*} U_{\alpha_i}) \to 
		f_j^* (\prod_i f_{i*} U_{\alpha_i}))\\
		\simeq
		(\pi_j^* A_{\alpha_j} \times U_{\alpha_j}\to U_{\alpha_j}).
		$$
		Since
		$(\pi_j^* A_{\alpha_j} \times U_{\alpha_j}\to U_{\alpha_j})
		\simeq (X_j  \times U_{\alpha_j}\to U_{\alpha_j}) $, and
		$f_j^* X \simeq f_j^*(  \prod_i f_{i*}X_i )\simeq X_j$,
		we have
		\begin{align*}
			&(f_j^*\pi^* (\prod_i \varphi_{i*}A_{\alpha_i})
			\times f_j^* (\prod_i f_{i*} U_{\alpha_i}) \to 
			f_j^* (\prod_i f_{i*} U_{\alpha_i}))\\
			\simeq&
			(\pi_j^* A_{\alpha_j} \times U_{\alpha_j}\to U_{\alpha_j})
			\simeq  (X_j  \times U_{\alpha_j}\to U_{\alpha_j})\\
			\simeq  &
			(f_j^* X
			\times f_j^* (\prod_i f_{i*} U_{\alpha_i}) \to 
			f_j^* (\prod_i f_{i*} U_{\alpha_i})),
		\end{align*}
		which finishes the proof.

	\end{proof}

	\begin{cor} \label{reduce to connected case}
		Suppose that $\B \simeq \prod_i \B_i $ in $\LTop$,
		and that each geometric morphism
		$\varphi_i^*:\B\to \B_i$ is \'etale.
		For each $i$, the $\infty$-topos
		$\X_i$ is defined by the pushout square
		\begin{center}
			\begin{tikzcd}
				\B \ar[d,"\varphi_i^*"swap]\ar[r,"\pi^*"]
				& \X \ar[d,"f_i^*"]\\
				\B_i \ar[r,"\pi_i^*"] & \X_i
			\end{tikzcd}
		\end{center}
		in $\LTop$. If $X\in \X$ such that
		$f_i^*X\in \X_i$ is $\B_i$-locally constant,
		for each $i$,
		then $X\in \X$ is $\B$-locally constant.
		In other words, the square 
		\begin{center}
			\begin{tikzcd}
				\LC_\B(\X)
				\ar[r]\ar[d,hook]
				& \prod_i \LC_{\B_i}(\X_i)\ar[d,hook]\\
				\X \ar[r] & \prod_i \X_i
			\end{tikzcd}
		\end{center}
		is cartesian in $\Cat_\infty$.
	\end{cor}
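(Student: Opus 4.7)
Since both $\LC_\B(\X)\hookrightarrow\X$ and each $\LC_{\B_i}(\X_i)\hookrightarrow\X_i$ are fully faithful, verifying that the square is cartesian reduces to the essentially surjective direction: given $X\in\X$ such that $f_i^*X\in\X_i$ is $\B_i$-locally constant for every $i$, I must show $X$ is $\B$-locally constant. My plan is to exploit the étale presentation $\B_i\simeq\B_{/V_i}$, combine it with the coproduct decomposition of the terminal object forced by $\B\simeq\prod_i\B_i$, and push the whole picture through $\pi^*$ to produce the required cover and trivializations in $\X$.

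First, since each $\varphi_i^*$ is étale, I choose $V_i\in\B$ with $\B_i\simeq\B_{/V_i}$ and $\varphi_i^*$ the pullback functor. Because pushouts of étale geometric morphisms are étale, the defining square of $\X_i$ yields $\X_i\simeq\X_{/W_i}$ with $W_i:=\pi^*V_i$ and $f_i^*$ again the étale pullback. The product decomposition $\B\simeq\prod_i\B_i$ in $\LTop$ gives $1_\B\simeq\sqcup_iV_i$ with each $V_i\hookrightarrow 1_\B$ a monomorphism (coproducts in an $\infty$-topos are disjoint). Applying $\pi^*$, which preserves both small colimits and finite limits, I obtain $1_\X\simeq\sqcup_iW_i$ with each $W_i\hookrightarrow 1_\X$ still a monomorphism.

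Next, I unpack the hypothesis. For each $i$, $\B_i$-local constancy of $f_i^*X$ supplies an effective epimorphism $\sqcup_{\alpha_i}U_{\alpha_i}\twoheadrightarrow 1_{\X_i}=W_i$ (which is equivalently an effective epimorphism in $\X$) together with objects $A_{\alpha_i}\in\B_i$ and equivalences $f_i^*X\times U_{\alpha_i}\simeq\pi_i^*A_{\alpha_i}\times U_{\alpha_i}$ in $(\X_i)_{/U_{\alpha_i}}$. Writing $A'_{\alpha_i}\to V_i$ for the object of $\B$ corresponding to $A_{\alpha_i}$ under $\B_i\simeq\B_{/V_i}$, the identification $\X_i\simeq\X_{/W_i}$ rewrites this equivalence as $X\times U_{\alpha_i}\simeq\pi^*A'_{\alpha_i}\times_{W_i}U_{\alpha_i}$ in $\X_{/U_{\alpha_i}}$. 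The crucial observation is that $W_i\hookrightarrow 1_\X$ is a monomorphism, so for any two maps into $W_i$ the fiber product over $W_i$ collapses to the ordinary product in $\X$; in particular $\pi^*A'_{\alpha_i}\times_{W_i}U_{\alpha_i}\simeq\pi^*A'_{\alpha_i}\times U_{\alpha_i}$. Combining these trivializations with the effective epimorphism $\sqcup_{i,\alpha_i}U_{\alpha_i}\twoheadrightarrow\sqcup_iW_i\simeq 1_\X$ exhibits $X$ as $\B$-locally constant.

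I expect the main technical obstacle to be the bookkeeping in the last step: carefully tracing how products computed in $\X_i=\X_{/W_i}$ and then in the further slice $(\X_i)_{/U_{\alpha_i}}$ reconcile with products in $\X$, and invoking the monomorphism property of $W_i\hookrightarrow 1_\X$ to pass from fiber products over $W_i$ to products over the terminal object. Everything else is structural (adjointness, preservation of (co)limits by $\pi^*$, and the recognition of effective epimorphisms through jointly conservative families).
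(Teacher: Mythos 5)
Your argument is correct, and it reaches the conclusion by a genuinely more direct route than the paper's. The paper begins exactly as you do: the \'etale presentations $\B_i\simeq \B_{/V_i}$ together with $\B\simeq\prod_i\B_i$ force $1_\B\simeq\sqcup_i V_i$, hence $\X\stackrel{\sim}{\to}\prod_i\X_i\simeq\prod_i\X_{/\pi^*V_i}$. But at that point the paper simply invokes Proposition \ref{prod of B-loc const}, the general statement that $\LC_{\prod_i\B_i}(\prod_i\X_i)\to\prod_i\LC_{\B_i}(\X_i)$ is an equivalence for an \emph{arbitrary} product of geometric morphisms; that proposition is proved with the cover $\sqcup_{\varphi:I\to J}\prod_i f_{i*}U_{\alpha_i}$ indexed by choice functions, the candidate constant objects $\prod_i\varphi_{i*}A_{\alpha_i}$, and a componentwise verification via the jointly conservative family $\{f_j^*\}$. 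You instead exploit the \'etale hypothesis all the way through: your cover is the much simpler $\sqcup_{i,\alpha_i}U_{\alpha_i}\twoheadrightarrow\sqcup_iW_i\simeq 1_\X$, and the trivializations transfer from $(\X_i)_{/U_{\alpha_i}}\simeq\X_{/U_{\alpha_i}}$ to $\X_{/U_{\alpha_i}}$ because the summand inclusion $W_i\hookrightarrow 1_\X$ is a monomorphism, so fiber products over $W_i$ coincide with products in $\X$. Your version is more elementary and avoids the right adjoints $f_{i*}$ entirely; the paper's detour is longer but buys Proposition \ref{prod of B-loc const} in a generality (no \'etale assumption) that the corollary itself does not need. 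Two minor points to make explicit if you write this up: first, the ``easy'' containment (that $f_i^*$ carries $\B$-locally constant objects to $\B_i$-locally constant ones, so that the square exists and cartesianness really does reduce to your essential-surjectivity claim) is the functoriality established in the paper's remark on commutative squares in $\LTop$; second, you should note that an effective epimorphism in $\X_{/W_i}$ onto $W_i$ remains one after applying the forgetful functor to $\X$, which is what lets you assemble the total cover of $1_\X$.
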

	\begin{proof}
		Since $\varphi_i^*:\B \to \B_i$ is \'etale,
		there exists $U_i\in \B$ such that $\B_i\simeq \B_{/U_i}$.
		Thus $\X_i \simeq \X_{/ \pi^* U_i}$.
		From the pushout square
		\begin{center}
			\begin{tikzcd}
				\B \ar[r,"\pi^*"]\ar[d]
				& \X \ar[d]\\
				\prod \B_{/U_i} \simeq \B_{/ \sqcup U_i } \ar[r]
				& \X_{/\pi^*(\sqcup_i  U_i)  }
				\simeq \prod \X_{/ \pi^* U_i },
			\end{tikzcd}
		\end{center}
		and the equivalence
		$\B\stackrel{\sim}{\to}\prod \B_i \simeq \prod \B_{/U_i}$,
		we know that the canonical functor
		$\X\to \prod \X_i \simeq \prod \X_{/\pi^* U_i}$
		is an equivalence.
		By Proposition \ref{prod of B-loc const},
		the functor
		$\LC_\B(\X)\to \prod \LC_{\B_i}(\X_i) $
		is an equivalence.
		Thus the square   
		\begin{center}
			\begin{tikzcd}
				\LC_\B(\X)
				\ar[r,"\sim"]\ar[d,hook]
				& \prod_i \LC_{\B_i}(\X_i)\ar[d,hook]\\
				\X \ar[r,"\sim"] & \prod_i \X_i
			\end{tikzcd}
		\end{center}
		is cartesian in $\Cat_\infty$.
	\end{proof}

	We will use the following remark in 
	Proposition \ref{connected case}.
	\begin{rem} \label{useful remark}
		\begin{itemize}
			\item [(1)]  We let $\{f_i:\C \to \D_i\}_i$ be a family of right adjoints.
			If its limit is $f:\C\to \D\simeq \lim_i \D_i$,
			then $f$ admits a left adjoint given by
			$f^L:\D \to \C; \{d_i\}_i \mapsto
			\mathrm{colim}_i  f_i^L(d_i)$,
			where $f_i^L$ is a left adjoint to $f_i$.
			\item [(2)]
			Given a commutative square
			\begin{center}
				\begin{tikzcd}
					\X \ar[r,"f^*"]\ar[d]
					& \Y \ar[d]\\
					\X_{/U}\ar[r,"\Tilde{f}^*"]
					& \Y_{/f^* U}
				\end{tikzcd}
			\end{center}
			in $\LTop$,
			if $f^*:\X\to \Y$ admits a left adjoint
			$f_!:\Y \to \X$,
			then $\Tilde{f}^*:\X_{/U}\to \Y_{/f^* U}$
			admits a left adjoint given by
			$$ \Tilde{f}_!:
			\Y_{/f^* U}\to\X_{/U};
			(V\to f^* U)\to (f_! V\to U).
			$$
			\item [(3)] Suppose we are given a family of maps
			$\{f_i^*:\X\to \Y_i \}_i$ in $\LTop$, and $U\in \X$,
			which induces the family
			$ \{ \Tilde{f}_i^*:\X_{/U}\to (\Y_i)_{/ f_i^* U} \}_i$
			whose limit is
			$\Tilde{f}^*: \X_{/U}\to \lim_i \Y_{i / f_i^* U}$.
			If each $f_i^*$ admits a left adjoint
			$f_{i!}:\Y_i \to \X$, then by (1) and (2), 
			$\Tilde{f}^*$ admits a left adjoint given by
			$$ \Tilde{f}_!:
			\lim_i \Y_{i / f_i^* U} \to \X_{/U};
			\{V_i \to f_i^* U\}_i \mapsto
			\mathrm{colim}_i ( f_{i!} V_i \to U ). 
			$$
		\end{itemize}
	\end{rem}

	\begin{prop} \label{connected case}
		Suppose that
		$\Fun(P,\An)\simeq \B \simeq \lim_i \B_i
		\simeq \lim_i \Fun(P_{\geq i},\An)$ in $\LTop$,
		where $P$ is a noetherian poset, 
		which is connected under the Alexandrov topology.
		For each $i$, the $\infty$-topos
		$\X_i$ is defined by the pushout square
		\begin{center}
			\begin{tikzcd}
				\B \ar[d,"\varphi_i^*"swap]\ar[r,"\pi^*"]
				& \X \ar[d,"f_i^*"]\\
				\B_i \ar[r,"\pi_i^*"] & \X_i
			\end{tikzcd}
		\end{center}
		in $\LTop$.
		If $X\in \X$ such that
		$f_i^*X\in \X_i$ is $\B_i$-locally constant,
		for each $i$,
		then $X\in \X$ is $\B$-locally constant.
		In other words, the square
		\begin{center}
			\begin{tikzcd}
				\LC_\B(\X)\ar[r]\ar[d,hook]
				& \lim_i \LC_{\B_i}(\X_i)\ar[d,hook]\\
				\X \ar[r] & \lim_i \X_i
			\end{tikzcd}
		\end{center}
		is cartesian in $\Cat_\infty$.
		In addition, the canonical functor
		$\LC_\B(\X)\to \lim_i \LC_{\B_i}(\X_i)$
		is an equivalence.
	\end{prop}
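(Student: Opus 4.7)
The plan is to exploit the étale structure coming from the open inclusions $P_{\geq i}\hookrightarrow P$ and directly produce a $\B$-trivializing cover for $X$ from the component-wise data on each $\X_i$, rather than via any sort of induction on the poset.

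First I set up the étale structure. Because each $P_{\geq i}\subseteq P$ is upward-closed, it is open in the Alexandrov topology, and so $\B_i\simeq\B_{/U_i}$ is étale over $\B$ via the subterminal $U_i\in\B$ classifying $P_{\geq i}$. Taking pushouts as in the diagram defining $\X_i$, one gets $\X_i\simeq\X_{/\pi^*U_i}$ étale over $\X$; in particular both $\varphi_i^*$ and $f_i^*$ admit left adjoints $\varphi_{i!}$ and $f_{i!}$, and $\varphi_i^*\varphi_{i!}\simeq\mathrm{id}$.

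The central step is the ``if'' direction: assuming each $f_i^*X\in\X_i$ is $\B_i$-locally constant, I show $X$ is $\B$-locally constant. For each $i$, pick witnessing data: an effective epimorphism $\sqcup_{\alpha_i}U_{i,\alpha_i}\twoheadrightarrow 1_{\X_i}$ and objects $A_{i,\alpha_i}\in\B_i$ trivializing $f_i^*X$ over each $U_{i,\alpha_i}$. Applying $f_{i!}$ (which preserves colimits and hence effective epimorphisms), I obtain $\sqcup_{\alpha_i}f_{i!}U_{i,\alpha_i}\twoheadrightarrow\pi^*U_i$ in $\X$, and I assemble these over $i$:
\[
\sqcup_{i,\alpha_i}\,f_{i!}U_{i,\alpha_i}\twoheadrightarrow\sqcup_i\pi^*U_i\simeq\pi^*\bigl(\sqcup_iU_i\bigr)\to 1_\X.
\]
The last arrow is an effective epimorphism because $\sqcup_iU_i\twoheadrightarrow 1_\B$ in $\B=\Fun(P,\An)$: the noetherian hypothesis (\emph{i.e.}\ DCC on $P$) guarantees every $p\in P$ lies above some index $i$ in the system, so the value of $\sqcup_iU_i$ at every $p$ is nonempty, and $\pi^*$ preserves effective epimorphisms. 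On each summand I take $\pi^*(\varphi_{i!}A_{i,\alpha_i})\in\X$ as the trivializer. Using the étale identification $\X_{/f_{i!}U_{i,\alpha_i}}\simeq(\X_i)_{/U_{i,\alpha_i}}$ together with $f_i^*\pi^*\varphi_{i!}\simeq\pi_i^*\varphi_i^*\varphi_{i!}\simeq\pi_i^*$, the required equivalence in $\X_{/f_{i!}U_{i,\alpha_i}}$ reduces to the given equivalence $(f_i^*X\times U_{i,\alpha_i}\to U_{i,\alpha_i})\simeq(\pi_i^*A_{i,\alpha_i}\times U_{i,\alpha_i}\to U_{i,\alpha_i})$ in $(\X_i)_{/U_{i,\alpha_i}}$. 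This shows $X$ is $\B$-locally constant and makes the square cartesian.

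Finally, for the additional equivalence $\LC_\B(\X)\xrightarrow{\sim}\lim_i\LC_{\B_i}(\X_i)$, the essential surjectivity is exactly the content of the step above, while full faithfulness follows from the equivalence $\X\simeq\lim_i\X_i$, itself obtained from $\B\simeq\lim_i\B_i$ by commuting the pushout defining each $\X_i$ with the limit over $i$ (justified via the explicit colimit description of left adjoints in Remark \ref{useful remark}). The main obstacle I anticipate is verifying $\sqcup_iU_i\twoheadrightarrow 1_\B$ carefully, \emph{i.e.}\ correctly pinning down the indexing system so that noetherianness of $P$ delivers the required covering of $1_\B$, together with establishing the compatibility $\X\simeq\lim_i\X_i$; connectedness of $P$ itself does not seem to enter directly, consistent with its role as complement to the product-decomposition case handled in Corollary \ref{reduce to connected case}.
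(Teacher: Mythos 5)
Your argument is correct, and its decisive step is genuinely different from --- and considerably more direct than --- the paper's. Both proofs set up the same \'etale structure ($\B_i\simeq\B_{/U_i}$, $\X_i\simeq\X_{/\pi^*U_i}$), extract the same witnessing data, and propose the same cover $\sqcup_{i,\alpha_i}f_{i!}U_{\alpha_i}\twoheadrightarrow 1_\X$ with trivializers $\varphi_{i!}A_{\alpha_i}\in\B$. The divergence is in verifying $(X\times W_{\alpha_i}\to W_{\alpha_i})\simeq(\pi^*\varphi_{i!}A_{\alpha_i}\times W_{\alpha_i}\to W_{\alpha_i})$ for $W_{\alpha_i}=f_{i!}U_{\alpha_i}$: the paper checks this after applying every $f_j^*$, using $\X\simeq\lim_j\X_j$ and Remark \ref{check after apply to f_i^*}, which forces a five-case analysis on how $j$ relates to $i$ --- this is where connectedness enters (Case V) and where the colimit formula $\varphi_j^*\varphi_{i!}\simeq\mathrm{colim}_{k}\varphi_{jk!}\varphi_{ik}^*$ is needed (Case IV). You instead observe that, $f_i^*$ being \'etale, there is an equivalence $\X_{/W_{\alpha_i}}\simeq(\X_i)_{/U_{\alpha_i}}$ compatible with the projections from $\X$ and with $\B\to\X\to\X_{/W_{\alpha_i}}$ (via $f_i^*\pi^*\varphi_{i!}\simeq\pi_i^*$), so the desired equivalence is the hypothesized one transported along an equivalence of $\infty$-categories. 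This collapses the paper's five cases into the single case $j=i$ and shows that the cartesianness assertion needs neither connectedness nor noetherianness, only that the opens $P_{\geq i}$ cover $P$ --- consistent with the fact that the disconnected case (Corollary \ref{reduce to connected case}) yields the same conclusion. Two small points: the effective epimorphism $\sqcup_iU_i\twoheadrightarrow 1_\B$ does not require the descending chain condition, since every $p$ already lies in $P_{\geq p}$ (noetherianness is consumed later, in the induction of Corollary \ref{the case P_{geq p}}); and the remaining input $\X\simeq\lim_i\X_i$, needed for full faithfulness and essential surjectivity, you justify the same way the paper does. Net effect: your route proves the statement with less bookkeeping and under weaker hypotheses, while the paper's longer computation yields, at most, explicit formulas for the restriction of the trivializing data to every stratum, which the conclusion does not require.
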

	
	\begin{proof}
		Note that
		since $P_{\geq i}\subset P$ is an open subset,
		the geometric morphism
		$\varphi_i^*:\B\simeq \Fun(P,\An)
		\to \Fun(P_{\geq i},\An)\simeq \B_i$ is \'etale, for each $i$.
		By \cite[6.3.5.8]{HTT}, 
		as the pushout, $f_i^*:\X\to \X_i$ is also \'etale.
		Since $\varphi_i^*:\B \to \B_i$ is \'etale,
		we can suppose $\B_i\simeq \B_{/B_i}$
		for some $B_i\in \B$, 
		and hence
		we have $\X_i \simeq \X_{/\pi^* B_i}$.
		We have the following pushout
		\begin{center}
			\begin{tikzcd}
				\B \ar[r,"\pi^*"]\ar[d]
				& \X \ar[d]\\
				\lim \B_{/B_i} \simeq
				\B_{/ \mathrm{colim} U_i } \ar[r]
				& \X_{/\pi^*(\mathrm{colim}  B_i)  }
				\simeq \lim \X_{/ \pi^* B_i }
			\end{tikzcd}
		\end{center}
		in $\LTop$.
		Since $\B \to \lim \B_{/B_i}\simeq \B_i$
		is an equivalence,
		the canonical functor
		$$(f_i^*)_i: 
		\X\to \lim \X_{/ \pi^* B_i }\simeq \lim \X_i $$
		is also an equivalence.
		If we have already known that the square
		\begin{center}
			\begin{tikzcd}
				\LC_\B(\X)\ar[r]\ar[d,hook]
				& \lim_i \LC_{\B_i}(\X_i)\ar[d,hook]\\
				\X \ar[r] & \lim_i \X_i
			\end{tikzcd}
		\end{center}
		is cartesian in $\Cat_\infty$, 
		then the functor
		$\X\to \lim_i \X_i$
		is an equivalence
		implies that the functor
		$ \LC_\B(\X)\to \lim_i \LC_{\B_i}(\X_i)$
		is an equivalence.
		Thus it remains to show the square is cartesian in 
		$\Cat_\infty$.

		We let $X\in \X$ is such that
		$f_i^* X\in \X_i$ is $\B_i$-locally constant, for each $i$. 
		Because $f_i^*X\in \X_i$
		is $\B_i$-locally constant,
		by definition, there exists an effective epimorphism
		$\sqcup_{\alpha_i\in I_i}
		U_{\alpha_i}\twoheadrightarrow 1_{\X_i}$,
		such that
		$ (f_i^* X\times U_{\alpha_i}\to U_{\alpha_i} )
		\simeq (\pi_i^* A_{\alpha_i}\times U_{\alpha_i}
		\to U_{\alpha_i})\in (\X_i)_{/U_{\alpha_i}} $,
		for some $A_{\alpha_i}\in \B_i$.
		
		We first claim that
		the map
		$\sqcup_{i,\alpha_i\in I_i} 
		f_{i!} U_{\alpha_i}\to 1_\X $
		is an effective epimorphism.
		Since the canonical functor
		$(f_j^*)_j:\X\to \lim_j \X_j $
		is an equivalence, we know that
		the family $\{f_j^*:\X\to \X_j\}_j$ is jointly conservative,
		by \cite[A.4.2.1]{SAG},
		it suffices to show that for each $j$,
		the map
		$$ f_j^*( \sqcup_{i,\alpha_i\in I_i} 
		f_{i!} U_{\alpha_i})\simeq
		\sqcup_{i,\alpha_i\in I_i}  f_j^* f_{i!}U_{\alpha_i}
		\simeq
		\sqcup_{i\neq j,\alpha_i\in I_i}
		f_j^* f_{i!}U_{\alpha_i}
		\sqcup (\sqcup_{\alpha_j} U_{\alpha_j})
		\to f_j^* 1_\X \simeq 1_{\X_j}$$
		is an effective epimorphism. 
		Since $\sqcup_{\alpha_j} U_{\alpha_j}
		\twoheadrightarrow 1_{\X_j} $,
		and it factors as
		$$\sqcup_{\alpha_j} U_{\alpha_j}\to
		\sqcup_{i\neq j,\alpha_i\in I_i}
		f_j^* f_{i!}U_{\alpha_i}
		\sqcup (\sqcup_{\alpha_j} U_{\alpha_j})
		\to  1_{\X_j},$$
		we know that
		$
		f_j^*( \sqcup_{i,\alpha_i\in I_i} 
		f_{i!} U_{\alpha_i}) \twoheadrightarrow 1_{\X_j} $.
		
		Now, we have the commutative diagram
		\begin{center}
			\begin{tikzcd}
				\B \ar[r,"\pi^*"]\ar[d,"\varphi_i^*"swap]
				& \X \ar[r]\ar[d,"f_i^*"]
				& \X_{/W_{\alpha_i}}\ar[d]\\
				\B_i \ar[r,"\pi_i^*"]
				& \X_i \ar[r]
				& {\X_i}_{/U_{\alpha_i}},
			\end{tikzcd}
		\end{center}
		where $W_{\alpha_i}\simeq f_{i!}U_{\alpha_i}$,
		and hence $U_{\alpha_i}\simeq f_i^* f_{i!}U_{\alpha_i}
		\simeq f_i^* W_{\alpha_i} $.
		Next, we claim that
		$$
		(X\times W_{\alpha_i} \to W_{\alpha_i} )
		\simeq
		(\pi^* \varphi_{i!} A_{\alpha_i}\times W_{\alpha_i}
		\to  W_{\alpha_i}  )
		\in \X_{/W_{\alpha_i}},
		$$
		which finishes the proof.
		Since we have the equivalence
		$\X\stackrel{\sim}{\to}\lim_j \X_j$,
		by Remark \ref{check after apply to f_i^*},
		it suffices to show that for each $j$, we have
		an equivalence
		$$
		(f_j^* X\times f_j^* W_{\alpha_i} \to f_j^* W_{\alpha_i} )
		\simeq
		(f_j^* \pi^* \varphi_{i!} A_{\alpha_i}\times f_j^* W_{\alpha_i}
		\to f_j^* W_{\alpha_i}  )
		\in {\X_j}_{/f_j^* W_{\alpha_i}}.
		$$
		Case I. \\
		If $j=i$, 
		since $\varphi_{i!}$ is fully faithful,
		we have the equivalences
		$
		(f_i^* \pi^* \varphi_{i!} A_{\alpha_i}
		\times  U_{\alpha_i}\to U_{\alpha_i} )
		\simeq 
		(\pi_i^* \varphi_i^* \varphi_{i!} A_{\alpha_i} 
		\times  U_{\alpha_i}\to U_{\alpha_i} )
		\simeq 
		(\pi_i^* A_{\alpha_i} 
		\times  U_{\alpha_i}\to U_{\alpha_i} )
		\simeq 
		(f_i^* X \times  U_{\alpha_i}\to U_{\alpha_i} )\in
		(\X_i)_{/U_{\alpha_i}}.
		$\\
		Case II. \\
		If $j>i$, we have the commutative diagram
		\begin{center}
			\begin{tikzcd}
				\B \ar[r,"\varphi_i^*"]\ar[d,"\pi^*"]
				& \B_i \ar[r,"\varphi_{ij}^*"]\ar[d,"\pi_i^*"]
				& \B_j \ar[d,"\pi_j^*"]\\
				\X \ar[r,"f_i^*"]
				& \X_i \ar[r,"f_{ij}^*"]
				& \X_j.
			\end{tikzcd}
		\end{center}
		In this case, we have
		$ f_j^* W_{\alpha_i}\simeq f_{ij}^* f_i^* W_{\alpha_i} 
		\simeq f_{ij}^* U_{\alpha_i}$.
		Applying the functor
		$f_{ij}^*:(\X_i)_{/ U_{\alpha_i}} \to  
		(\X_j)_{ / f_j^* W_{\alpha_i} }$
		to the equivalence
		$$
		(f_i^* \pi^* \varphi_{i!} A_{\alpha_i}
		\times  U_{\alpha_i}\to U_{\alpha_i} )
		\simeq 
		(f_i^* X \times  U_{\alpha_i}\to U_{\alpha_i} )
		\in  (\X_i)_{/ U_{\alpha_i}}
		$$
		from Case I, we get the equivalence
		$$
		(f_j^* \pi^* \varphi_{i!} A_{\alpha_i}\times f_j^* W_{\alpha_i}
		\to f_j^* W_{\alpha_i}  )
		\simeq 
		( f_j^* X  \times f_j^* W_{\alpha_i}
		\to f_j^* W_{\alpha_i}  )\in (\X_j)_{/ f_j^* W_{\alpha_i} }.
		$$
		Case III.\\
		If $j<i$, we have the commutative diagram
		\begin{center}
			\begin{tikzcd}
				\B \ar[r,"\varphi_j^*"]\ar[d,"\pi^*"]
				& \B_j \ar[r,"\varphi_{ji}^*"]\ar[d,"\pi_j^*"]
				& \B_i \ar[d,"\pi_i^*"]\\
				\X \ar[r,"f_j^*"]
				& \X_j \ar[r,"f_{ji}^*"]
				& \X_i.
			\end{tikzcd}
		\end{center}
		Since $\varphi_i^*$ and $\varphi_j^*$ are \'etale,
		by \cite[6.3.5.9]{HTT},
		$\varphi_{ji}^*:\B_j \to \B_i$ is \'etale. 
		Thus by Lemma \ref{left adjointable etale case}, 
		we have $ \pi_j^* \varphi_{ji!}\simeq f_{ji!}\pi_i^*$.
		Now we have the equivalences
		\begin{align*}
			&(f_j^* \pi^* \varphi_{i!} A_{\alpha_i}\times f_j^* W_{\alpha_i}
			\to f_j^* W_{\alpha_i}  )
			\simeq 
			(\pi_j^* \varphi_j^*
			\varphi_{i!} A_{\alpha_i}\times f_j^* W_{\alpha_i}
			\to f_j^* W_{\alpha_i}  )\\
			\simeq  &
			(\pi_j^* \varphi_j^*
			\varphi_{j!} \varphi_{ji!} A_{\alpha_i}\times f_j^* W_{\alpha_i}
			\to f_j^* W_{\alpha_i}  )
			\simeq
			(\pi_j^*  \varphi_{ji!} A_{\alpha_i}\times f_j^* W_{\alpha_i}
			\to f_j^* W_{\alpha_i}  )\\
			\simeq &  
			( f_{ji!} \pi_i^* A_{\alpha_i}\times f_j^* W_{\alpha_i}
			\to f_j^* W_{\alpha_i}  ).
		\end{align*}
		Again, since $f_{ji}^*:\X_j \to \X_i$ is \'etale, 
		by Lemma \ref{left adjointable etale case}, 
		there is a commutative square
		\begin{center}
			\begin{tikzcd}
				\X_i \ar[r]\ar[d,"f_{ji!}"swap]
				&  {\X_i}_{/U_{\alpha_i}}\ar[d,"\Tilde{f}_{ji!}"]\\
				\X_j \ar[r]
				& {\X_j}_{ / f_j^* W_{\alpha_i} }.
			\end{tikzcd}
		\end{center}
		The object
		$( f_{ji!} \pi_i^* A_{\alpha_i}\times f_j^* W_{\alpha_i}
		\to f_j^* W_{\alpha_i}  )\in {\X_j}_{ / f_j^* W_{\alpha_i} }$
		is the image of $ \pi_i^* A_{\alpha_i}\in \X_i $
		under the functor
		$\X_i \to \X_j \to  {\X_j}_{ / f_j^* W_{\alpha_i} }$,
		thus it is also the image of
		$
		(f_i^* X  \times U_{\alpha_i}\to U_{\alpha_i})
		\simeq
		(\pi_i^* A_{\alpha_i}  \times U_{\alpha_i}\to U_{\alpha_i})
		\in {\X_i}_{/U_{\alpha_i}} $
		under the functor
		$ \Tilde{f}_{ji_!}:
		{\X_i}_{/U_{\alpha_i}} \simeq
		{\X_i}_{/ f_{ji}^* f_j^* W_{\alpha_i}}
		\to
		{\X_j}_{ / f_j^* W_{\alpha_i} }$.
		By Remark \ref{useful remark},
		the functor $\Tilde{f}_{ji_!}$ assigns
		$A\to U_{\alpha_i}\simeq f_i^* W_{\alpha_i}
		\simeq f_{ji}^* f_j^* W_{\alpha_i} $
		to $ f_{ji!} A\to  f_j^* W_{\alpha_i}$,
		we get the equivalences
		\begin{align*}
			&  \Tilde{f}_{ji_!}
			(f_i^* X  \times U_{\alpha_i}\to U_{\alpha_i})
			\simeq   \Tilde{f}_{ji_!}
			(f_i^* X  \times  U_{\alpha_i}  \to 
			f_{ji}^* f_j^* W_{\alpha_i}) \\
			\simeq   &
			(f_{ji!}( f_i^* X \times U_{\alpha_i} ) 
			\to f_j^* W_{\alpha_i} )
			\simeq  
			(f_{ji!}( f_{ji}^* f_j^* X  \times  U_{\alpha_i} ) 
			\to f_j^* W_{\alpha_i} )   \\
			\simeq   &
			( f_j^* X  \times f_{ji!} U_{\alpha_i} 
			\to f_j^* W_{\alpha_i} ) 
			\simeq   
			( f_j^* X \times f_j^* W_{\alpha_i} 
			\to f_j^* W_{\alpha_i} ) ,
		\end{align*}
		here the final equivalence
		$  f_{ji!} U_{\alpha_i}  \simeq f_j^* W_{\alpha_i} $
		is because
		$$ f_j^* W_{\alpha_i}
		\simeq f_j^* f_{i!}U_{\alpha_i}
		\simeq  f_j^* f_{j!} f_{ji!} U_{\alpha_i} 
		\simeq f_{ji!} U_{\alpha_i}  .$$
		Therefore, for $j<i$, we have
		$$
		(f_j^* X\times f_j^* W_{\alpha_i} \to f_j^* W_{\alpha_i} )
		\simeq
		(f_j^* \pi^* \varphi_{i!} A_{\alpha_i}\times f_j^* W_{\alpha_i}
		\to f_j^* W_{\alpha_i}  )
		\in {\X_j}_{/f_j^* W_{\alpha_i}}.
		$$
		Case IV.\\
		The case that there exists some $k$ such that
		$k> i$ and $k> j$.
		We need to show that
		$
		(f_j^* X\times f_j^* W_{\alpha_i} \to f_j^* W_{\alpha_i} )
		\simeq
		(f_j^* \pi^* \varphi_{i!} A_{\alpha_i}\times f_j^* W_{\alpha_i}
		\to f_j^* W_{\alpha_i}  )
		\in \X_{/f_j^* W_{\alpha_i}}.
		$
		There is a pushout square
		\begin{center}
			\begin{tikzcd}
				\B \ar[r,"\varphi_i^*"]\ar[d,"\varphi_j^*"swap]
				&  \B_i \ar[d,"\Phi_i^*"]\\
				\B_j \ar[r,"\Phi_j^*"]
				&  \lim_{k\geq i,j} \B_k
			\end{tikzcd}
		\end{center}
		in $\LTop$,
		where
		$\Phi_i^*: \B_i \to \lim_{k\geq i,j} \B_k;
		x \mapsto \{ \varphi_{ik}^*(x) \}_k$,
		which admits a left adjoint
		$ \Phi_{i!}:\lim_{k\geq i,j} \B_k\to  \B_i;
		\{x_k\}_k \mapsto \mathrm{colim}_k \varphi_{ik !}(x_k)$.
		The above square is horizontally left adjointable.
		That is, the canonical map
		$ \Phi_{j!}\Phi_i^* \to  \varphi_j^* \varphi_{i!}$
		is an equivalence. In other words,
		we have the equivalence
		$$
		\varphi_j^* \varphi_{i!}
		\simeq
		\mathrm{colim}_{k\geq i,j} \varphi_{jk!} \varphi_{ik}^*.
		$$
		Also,
		since each $f_i^*: \X\to \X_i$ is \'etale, 
		we have the pushout square
		\begin{center}
			\begin{tikzcd}
				\X \ar[r,"f_i^*"]\ar[d,"f_j^*"swap]
				&  \X_i \ar[d,"F_i^*"]\\
				\X_j \ar[r,"F_j^*"]
				&  \lim_{k\geq i,j} \X_k
			\end{tikzcd}
		\end{center}
		in $\LTop$, 
		where the functor $F_j^*$ is given by
		$F_j^*: \X_j \to  \lim_{k\geq i,j}\X_k;
		x \mapsto  \{ f_{jk}^*(x) \}_{k},$
		and its left adjoint $F_{j!}$ is given 
		$F_{j!}:\lim_{k\geq i,j}\X_k\to \X_j;
		\{x_k\}_k \mapsto \mathrm{colim}_k f_{jk!}(x_k).$
		And thus we get the equivalences
		$$f_j^* f_{i!}\simeq
		F_{j!} F_i^* \simeq
		\mathrm{colim}_{k\geq i,j}f_{jk!} f_{ik}^*.$$
		By the equivalence
		$\varphi_j^* \varphi_{i!}
		\simeq
		\mathrm{colim}_{k\geq i,j} \varphi_{jk!} \varphi_{ik}^*$,
		we get
		\begin{align*}
			f_j^* \pi^* \varphi_{i!} A_{\alpha_i}
			\simeq
			\pi_j^* \varphi_j^* \varphi_{i!} A_{\alpha_i}
			\simeq
			\pi_j^*( \mathrm{colim}_{k\geq i,j} 
			\varphi_{jk!} \varphi_{ik}^* A_{\alpha_i}  )
			\simeq
			\mathrm{colim}_{k\geq i,j} \pi_j^*
			\varphi_{jk!} \varphi_{ik}^* A_{\alpha_i}.
		\end{align*}
		Since the square
		\begin{center}
			\begin{tikzcd}
				\B_j \ar[r,"\pi_j^*"]\ar[d,"\varphi_{jk}^*"swap]
				& \X_j \ar[d,"f_{jk}^*"]\\
				\B_k \ar[r,"\pi_k^*"]
				& \X_k
			\end{tikzcd}
		\end{center}
		is vertically left adjointable, 
		we have $ \pi_j^* \varphi_{jk!}
		\simeq f_{jk!}\pi_k^* $. Thus
		$$
		f_j^* \pi^* \varphi_{i!} A_{\alpha_i}
		\simeq 
		\mathrm{colim}_{k\geq i,j}
		f_{jk!}\pi_k^* \varphi_{ik}^* A_{\alpha_i}
		\simeq 
		\mathrm{colim}_{k\geq i,j}
		f_{jk!} f_{ik}^* \pi_i^*  A_{\alpha_i}.
		$$
		Because $F_j^*: \X_j \to  \lim_{k\geq i,j} \X_k$
		is \'etale, and
		$$
		F_j^*( f_j^* W_{\alpha_i} )
		\simeq 
		\{ f_{jk}^*f_j^* f_{i!}U_{\alpha_i}  \}_k
		\simeq 
		\{ f_k^* f_{i!}U_{\alpha_i}  \}_k
		\simeq 
		\{ f_{ik}^* f_i^* f_{i!}U_{\alpha_i}  \}_k
		\simeq 
		\{ f_{ik}^* U_{\alpha_i}  \}_k,
		$$
		we get the following pushout square
		\begin{center}
			\begin{tikzcd}
				\X_j \ar[r]\ar[d,"F_j^*"]
				& (\X_j)_{/ f_j^* W_{\alpha_i}  }
				\ar[d,"\Tilde{F}_j^*"] 
				\\
				\lim_{k\geq i,j} \X_k \ar[r]
				&(  \lim_{k\geq i,j} \X_k)_{/ 
					(f_{ik}^* U_{\alpha_i} )_k }\simeq
				\lim_{k\geq i,j}  (\X_k)_{/ f_{ik}^* U_{\alpha_i} }
			\end{tikzcd}
		\end{center}
		in $\LTop$,
		which is vertically left adjointable,
		by Lemma \ref{left adjointable etale case}.
		Note that the image of
		$\{f_k^* X\}_k  \in \lim_{k\geq i,j} \X_k $
		under the functor
		$ \lim_{k\geq i,j} \X_k\to
		\lim_{k\geq i,j}  (\X_k)_{/ f_{ik}^* U_{\alpha_i} } $
		is
		$ \{ f_k^* X \times f_{ik}^* U_{\alpha_i}
		\to f_{ik}^* U_{\alpha_i} \}_k $,
		and the image of
		$\{f_{ik}^* \pi_i^* A_{\alpha_i} \}_k
		\in \lim_{k\geq i,j} \X_k $
		under the functor
		$ \lim_{k\geq i,j} \X_k\to
		\lim_{k\geq i,j}  (\X_k)_{/ f_{ik}^* U_{\alpha_i} } $
		is
		$ \{ f_{ik}^* \pi_i^* A_{\alpha_i}
		\times f_{ik}^* U_{\alpha_i}
		\to f_{ik}^* U_{\alpha_i} \}_k $.
		By assumption we have
		$$( \pi_i^* A_{\alpha_i}\times U_{\alpha_i}
		\to U_{\alpha_i} )
		\simeq   ( f_i^* X \times U_{\alpha_i}
		\to U_{\alpha_i} )\in (\X_i)_{/ U_{\alpha_i}},$$
		applying the functor
		$f_{ik}^*: (\X_i)_{/ U_{\alpha_i}}
		\to (\X_k)_{/ f_{ik}^* U_{\alpha_i}}$
		to this equivalence, 
		since
		$ f_{ik}^* U_{\alpha_i}
		\simeq  f_{ik}^* f_i^* W_{\alpha_i}
		\simeq f_k^* W_{\alpha_i}$,
		we get the equivalences
		\begin{align*}
			&   (  f_k^* X \times f_k^* W_{\alpha_i}
			\to f_k^* W_{\alpha_i} )
			\simeq
			(  f_k^* X \times f_{ik}^* U_{\alpha_i}
			\to f_{ik}^* U_{\alpha_i} )\\
			\simeq &  
			( f_{ik}^* \pi_i^* A_{\alpha_i}
			\times f_{ik}^* U_{\alpha_i}
			\to f_{ik}^* U_{\alpha_i} )
			\simeq
			( f_{ik}^* \pi_i^* A_{\alpha_i}
			\times f_k^* W_{\alpha_i} 
			\to f_k^* W_{\alpha_i}  )\in(\X_k)_{/ f_{ik}^* U_{\alpha_i}}.
		\end{align*}
		We denote the functor
		$ \Tilde{F}_{j!}:
		\lim_{k\geq i,j} (\X_k)_{/  f_{ik}^* U_{\alpha_i}}
		\to (\X_j)_{/f_j^*W_{\alpha_i} } $,
		which is the left adjoint of $\Tilde{F}_j^*$.
		Next, we compute 
		the images of
		$\{  f_k^* X \times f_k^* W_{\alpha_i}
		\to f_k^* W_{\alpha_i}  \}_k$
		and
		$
		\{ f_{ik}^* \pi_i^* A_{\alpha_i}
		\times f_k^* W_{\alpha_i} 
		\to f_k^* W_{\alpha_i}  \}_k
		$
		under the functor $ \Tilde{F}_{j!}$,
		which are the same. \\
		For 
		$\{  f_k^* X \times f_k^* W_{\alpha_i}
		\to f_k^* W_{\alpha_i}  \}_k
		\in  \lim_{k\geq i,j} (\X_k)_{/ f_k^* W_{\alpha_i} }$,
		by Remark \ref{useful remark}, 
		its image under $ \Tilde{F}_{j!}$ is
		$$
		\mathrm{colim}_{k\geq i,j}
		( f_{jk!}(f_k^* X \times f_k^* W_{\alpha_i})\to f_j^* W_{\alpha_i}).
		$$
		By the projection formula for \'etale geometric morphisms,
		we have
		$$
		f_{jk!}(f_k^* X \times f_k^* W_{\alpha_i})
		\simeq f_{jk!}(f_{jk}^* f_j^* X \times f_k^* W_{\alpha_i})
		\simeq f_j^* X \times   f_{jk!}f_k^* W_{\alpha_i}.
		$$
		Because $k>i$, we have
		$$
		f_{jk!}f_k^* W_{\alpha_i}
		\simeq f_{jk!} f_k^* f_{i!} U_{\alpha_i}
		\simeq f_{jk!} f_{ik}^* f^*_i f_{i!} U_{\alpha_i}
		\simeq  f_{jk!} f_{ik}^*  U_{\alpha_i}.
		$$
		Thus we get the equivalences
		\begin{align*}
			&\mathrm{colim}_{k\geq i,j}
			( f_{jk!}(f_k^* X \times f_k^* W_{\alpha_i})
			\to f_j^* W_{\alpha_i})
			\simeq 
			\mathrm{colim}_{k\geq i,j}
			( f_j^* X \times f_{jk!} f_{ik}^*  U_{\alpha_i}  \to f_j^* W_{\alpha_i})\\
			\simeq &
			( f_j^* X \times  
			\mathrm{colim}_{k\geq i,j} 
			f_{jk!} f_{ik}^*  U_{\alpha_i}  \to f_j^* W_{\alpha_i})
			\simeq
			( f_j^* X \times  
			f_j^* f_{i!}  U_{\alpha_i}  \to f_j^* W_{\alpha_i})\\
			\simeq &
			( f_j^* X \times  
			f_j^*W_{\alpha_i}  \to f_j^* W_{\alpha_i}).
		\end{align*}
		For
		$\{ f_{ik}^* \pi_i^* A_{\alpha_i}
		\times f_k^* W_{\alpha_i} 
		\to f_k^* W_{\alpha_i}  \}_k  \in
		\lim_{k\geq i,j} (\X_k)_{/ f_k^* W_{\alpha_i} }$,
		by Remark \ref{useful remark}, 
		its image under $ \Tilde{F}_{j!}$ is
		$$
		\mathrm{colim}_{k\geq i,j}
		(f_{jk!} (f_{ik}^* \pi_i^* A_{\alpha_i}
		\times f_k^* W_{\alpha_i} )
		\to f_j^* W_{\alpha_i}  ).
		$$
		Again, by the projection formula, we have
		$$
		f_{jk!} (f_{ik}^* \pi_i^* A_{\alpha_i}
		\times f_k^* W_{\alpha_i} )
		\simeq
		f_{jk!} (f_{ik}^* \pi_i^* A_{\alpha_i}
		\times f_{jk}^* f_j^* W_{\alpha_i} )
		\simeq
		f_{jk!} f_{ik}^* \pi_i^* A_{\alpha_i}
		\times f_{j}^*  W_{\alpha_i} .
		$$
		Thus we get
		\begin{align*}
			&    \mathrm{colim}_{k\geq i,j}
			(f_{jk!} (f_{ik}^* \pi_i^* A_{\alpha_i}
			\times f_k^* W_{\alpha_i} )
			\to f_j^* W_{\alpha_i}  )
			\simeq 
			\mathrm{colim}_{k\geq i,j}
			( f_{jk!} f_{ik}^* \pi_i^* A_{\alpha_i}
			\times f_{j}^*  W_{\alpha_i}
			\to f_j^* W_{\alpha_i}  ) \\
			\simeq &
			( ( \mathrm{colim}_{k\geq i,j}
			f_{jk!} f_{ik}^* \pi_i^* A_{\alpha_i})
			\times f_{j}^*  W_{\alpha_i}
			\to f_j^* W_{\alpha_i}  ) 
			\simeq
			(  f_j^* f_{i!} \pi_i^* A_{\alpha_i}
			\times f_{j}^*  W_{\alpha_i}
			\to f_j^* W_{\alpha_i}  ) \\
			\simeq &
			(  f_j^*\pi^* \varphi_{i!} A_{\alpha_i}
			\times f_{j}^*  W_{\alpha_i}
			\to f_j^* W_{\alpha_i}  ).
		\end{align*}
		Now,
		we can conclude that we have the equivalence
		$$
		( f_j^* \pi^* \varphi_{i!} A_{\alpha_i} 
		\times   f_j^* W_{\alpha_i}\to  f_j^* W_{\alpha_i} )
		\simeq
		(  f_j^* X 
		\times   f_j^* W_{\alpha_i}\to  f_j^* W_{\alpha_i} ).
		$$
		Case V.\\
		Finally, we consider the case that
		$j$ such that there exists no $k$ such that
		$k\geq i$ and $k\geq j$.
		Because $P$ is connected, 
		there exists some $k$ such that
		$i> k$ and $j> k$.
		Since $k<i$,
		by Case III, we have the equivalence
		$$
		(f_k^* X\times f_k^* W_{\alpha_i} \to f_k^* W_{\alpha_i} )
		\simeq
		(f_k^* \pi^* \varphi_{i!} A_{\alpha_i}\times f_k^* W_{\alpha_i}
		\to f_k^* W_{\alpha_i}  )
		\in {\X_k}_{/f_k^* W_{\alpha_i}}.
		$$
		Since $j>k$, applying the functor
		$f_{kj}^*: \X_{k}\to \X_{j}$ to this equivalence,
		we get the equivalence
		$$
		(f_j^* X\times f_j^* W_{\alpha_i} \to f_j^* W_{\alpha_i} )
		\simeq
		(f_j^* \pi^* \varphi_{i!} A_{\alpha_i}\times f_j^* W_{\alpha_i}
		\to f_j^* W_{\alpha_i}  )
		\in {\X_j}_{/f_j^* W_{\alpha_i}}.
		$$
	\end{proof}

	\begin{cor} \label{the case P_{geq p}}
		If we are given a geometric morphism
		$\B \simeq \Fun(P_{\geq p},\An) \to \X$,
		where $P$ is a noetherian poset and $p\in P$, 
		then the induced square
		\begin{center}
			\begin{tikzcd}
				\LC_\B(\X)\ar[r]\ar[d,hook] &
				\prod_{q\geq p}
				\LC(\X_q) \ar[d,hook] \\
				\X \ar[r]
				& \prod_{q\geq p} \X_q
			\end{tikzcd}
		\end{center}
		is cartesian in $\Cat_\infty$. 
	\end{cor}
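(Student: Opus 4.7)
The plan is to proceed by noetherian induction on $p\in P$ (valid since $P$ is a noetherian poset), combining the recollement computation of Lemma~\ref{loc-const-check-locally} with the product and limit decompositions provided by Corollary~\ref{reduce to connected case} and Proposition~\ref{connected case}.

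For the base case $P_{\geq p}=\{p\}$, i.e.\ $p$ maximal in $P$, one has $\B=\An$, $\X=\X_p$, and $\LC_\B(\X)=\LC(\X_p)$ tautologically, so the square is trivially cartesian. For the inductive step, assume the corollary holds for every $p'>p$. First I would apply the recollement of $\B=\Fun(P_{\geq p},\An)$ into the closed piece $\An$ (corresponding to $\{p\}\subset P_{\geq p}$) and the open piece $\mathscr{U}=\Fun(P_{>p},\An)$. Lemma~\ref{loc-const-check-locally} then expresses $\LC_\B(\X)$ as the fiber product of $\X\to \X_p\times \X_\mathscr{U}$ with $\LC(\X_p)\times \LC_\mathscr{U}(\X_\mathscr{U})\hookrightarrow \X_p\times \X_\mathscr{U}$, reducing the problem to computing $\LC_\mathscr{U}(\X_\mathscr{U})$.

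Next I would decompose $P_{>p}$ into its Alexandrov-connected components $P_{>p}=\bigsqcup_\alpha P^\alpha$. Each $P^\alpha$ is open-closed in $P_{>p}$, so $\Fun(P_{>p},\An)\simeq \prod_\alpha \Fun(P^\alpha,\An)$ with \'etale projections, and Corollary~\ref{reduce to connected case} reduces the computation to each connected component. For each connected $P^\alpha$, I would apply Proposition~\ref{connected case} to the cover of $P^\alpha$ by $\{P_{\geq i}\}_{i}$, noting that $P^\alpha_{\geq i}=P_{\geq i}$ for $i\in P^\alpha$ since $P^\alpha$ is upward-closed in $P$. This presents $\Fun(P^\alpha,\An)$ as a limit along the Čech nerve of the cover and yields a corresponding limit expression for $\LC_{\Fun(P^\alpha,\An)}(\X_{\mathscr{U}^\alpha})$. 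Every index topos $\Fun(P_{\geq i},\An)$ involved has $i>p$, so the induction hypothesis applies to each factor, and since cartesian squares are stable under limits in $\Cat_\infty$, assembling the recollement, the product over $\alpha$, and the Čech limit gives the desired cartesian square for $\LC_\B(\X)$.

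The hardest part will be the bookkeeping: iterated intersections $P_{\geq i_1}\cap\cdots\cap P_{\geq i_n}$ appearing in the Čech nerve are upward-closed but not a priori of the form $P_{\geq q}$ for a single $q$, so they must be further decomposed as unions of such sets (using noetherianness to guarantee the recursion terminates). In parallel one must check that the strata $\X_q$ arising at each stage of the recollement, the product decomposition, and the Čech limit all coincide with the strata of the original $P_{\geq p}$-stratification of $\X$; this follows from the universal property of the pullback square defining $\X_q$, but has to be traced through each intermediate construction in order to feed the induction hypothesis in correctly.
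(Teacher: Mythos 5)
Your proposal follows essentially the same route as the paper's proof: noetherian induction on $p$, recollement of $\Fun(P_{\geq p},\An)$ along the closed point $\{p\}$ via Lemma~\ref{loc-const-check-locally}, identification of $\LC_{\mathscr{U}}(\X_{\mathscr{U}})$ with $\lim_{q>p}\LC_{\B_q}(\X_{\B_q})$ via Proposition~\ref{connected case}, and then the inductive hypothesis together with stability of cartesian squares under limits. Your additional step of splitting $P_{>p}$ into Alexandrov-connected components before invoking Proposition~\ref{connected case} is a sensible refinement (the paper applies that proposition to $\bigcup_{q>p}P_{\geq q}$ without addressing its connectedness hypothesis), while your concern about non-principal iterated intersections is unnecessary, since the limit in Proposition~\ref{connected case} is indexed by the poset of principal up-sets with restriction functors rather than by a \v{C}ech nerve.
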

	\begin{proof}
		We prove it by noetherian induction on $p\in P$.
		Suppose it is true for
		$ \Fun(P_{\geq q},\An)$, where $q>p$.
		We need to show that it is also true for
		$ \Fun(P_{\geq p},\An)$.
		Note that we have
		$$
		P_{\geq p}\setminus \{p\}=
		\bigcup_{q>p} P_{\geq q},
		$$
		where $\{p\}\subset P_{\geq p}$ is closed and
		$\bigcup_{q>p} P_{\geq q}\subset P_{\geq p}$
		is open.
		Thus by Lemma \ref{loc-const-check-locally},
		we have the cartesian square
		\begin{center}
			\begin{tikzcd}
				\LC_\B(\X)\ar[r]\ar[d,hook]
				& \LC(\X_p )
				\times \LC_\mathscr{U}(\X_\mathscr{U})
				\ar[d,hook]\\
				\X \ar[r]
				& \X_p\times \X_\mathscr{U},
			\end{tikzcd}
		\end{center}
		where $\B\simeq \Fun( P_{\geq p},\An )$
		and $\mathscr{U}\simeq 
		\Fun( \bigcup_{q>p} P_{\geq q},\An  )
		\simeq \lim_{q>p} \Fun( P_{\geq q},\An )
		\simeq  \lim_{q>p} \B_q  . $
		By Proposition \ref{connected case},
		the inclusion
		$\LC_\mathscr{U}(\X_\mathscr{U})\hookrightarrow \X_\mathscr{U}$
		is exactly the inclusion
		$$
		\lim_{q>p} 
		\LC_{\B_q}(\X_{\B_q})\hookrightarrow
		\lim_{q>p}  \X_{\B_q}.
		$$
		By the induction assumption, for each $q>p$, we have the cartesian square
		\begin{center}
			\begin{tikzcd}
				\LC_{\B_q}(\X_{\B_q})\ar[r]
				\ar[d,hook] &\prod_{r\geq q} \LC( \X_r) \ar[d,hook]  \\
				\X_{\B_q}\ar[r]
				& \prod_{r\geq q} \X_r.
			\end{tikzcd}
		\end{center}
		Taking the limit over the index set $\{q>p\}$, we get the cartesian square
		\begin{center}
			\begin{tikzcd}
				\LC_\mathscr{U}(\X_\mathscr{U}) \simeq
				\lim_{q>p} \LC_{\B_q}(\X_{\B_q})\ar[r]
				\ar[d,hook] &
				\lim_{q>p} \prod_{r\geq q} \LC( \X_r) 
				\simeq \prod_{r>p}\LC( \X_r) 
				\ar[d,hook]  \\
				\X_\mathscr{U}\simeq
				\lim_{q>p} \X_{\B_q}\ar[r]
				&\lim_{q>p} \prod_{r\geq q} \X_r
				\simeq \prod_{r>p} \X_r.
			\end{tikzcd}
		\end{center}
		Hence we get the cartesian square
		\begin{center}
			\begin{tikzcd}
				\LC_\B(\X)\ar[r]\ar[d,hook]
				& \LC(\X_p )
				\times \LC_\mathscr{U}(\X_\mathscr{U})\ar[r]
				\ar[d,hook] & \prod_{r\geq p}\LC( \X_r) 
				\ar[d,hook]  \\
				\X \ar[r]
				& \X_p\times \X_\mathscr{U} \ar[r]
				&  \prod_{r\geq p} \X_r. 
			\end{tikzcd}
		\end{center}

	\end{proof}

	\begin{thm} \label{cart for LC_B}
		If we are given a geometric morphism
		$\pi^*:\B\simeq \Fun(P,\An)\to \X$,
		where $P$ is a noetherian poset, 
		then the induced square
		\begin{center}
			\begin{tikzcd}
				\LC_\B(\X)\ar[r]\ar[d,hook]
				& \prod_{p\in P} \LC(\X_p)\ar[d,hook]\\
				\X \ar[r] &
				\prod_{p\in P} \X_p
			\end{tikzcd}
		\end{center}
		is cartesian in $\Cat_\infty$.   
	\end{thm}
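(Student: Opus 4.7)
The plan is to apply Proposition \ref{connected case} together with Corollary \ref{the case P_{geq p}} after reducing to the connected case.

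First, I would reduce to the case where $P$ is connected under the Alexandrov topology. Since $P$ is noetherian, it decomposes as $P = \sqcup_j P_j$ into finitely many clopen connected components, giving $\Fun(P, \An) \simeq \prod_j \Fun(P_j, \An)$ and $\X \simeq \prod_j \X_{P_j}$ via the \'etale projections. Corollary \ref{reduce to connected case} then identifies $\LC_\B(\X)$ with $\X \times_{\prod_j \X_{P_j}} \prod_j \LC_{\Fun(P_j, \An)}(\X_{P_j})$, and the splittings of $\prod_{p \in P} \X_p$ and $\prod_{p \in P} \LC(\X_p)$ along components reduce the cartesian square for $P$ to those for each component $P_j$.

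Next, assume $P$ is connected noetherian. Then $P$ is covered by its open subposets $P_{\geq i}$ for $i \in P$, giving an equivalence $\Fun(P, \An) \simeq \lim_{i \in P} \Fun(P_{\geq i}, \An)$. Applying Proposition \ref{connected case} yields $\LC_\B(\X) \simeq \lim_i \LC_{\B_i}(\X_i)$, where $\B_i := \Fun(P_{\geq i}, \An)$ and $\X_i$ is the corresponding pushout. For each $i$, the poset $P_{\geq i}$ has unique minimum $i$, so Corollary \ref{the case P_{geq p}} provides $\LC_{\B_i}(\X_i) \simeq \X_i \times_{\prod_{q \geq i} \X_q} \prod_{q \geq i} \LC(\X_q)$. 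Taking the limit over $i \in P$ and commuting limits with pullbacks, using that $\X \simeq \lim_i \X_i$ and $\prod_{p \in P} \X_p \simeq \lim_i \prod_{q \geq i} \X_q$ (and similarly for the $\LC$-product), produces the desired cartesian square $\LC_\B(\X) \simeq \X \times_{\prod_{p \in P} \X_p} \prod_{p \in P} \LC(\X_p)$.

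The main obstacle is verifying the identification $\Fun(P, \An) \simeq \lim_{i \in P} \Fun(P_{\geq i}, \An)$ for connected noetherian $P$, which is the standing hypothesis of Proposition \ref{connected case}. This requires recognizing $P$ as the colimit of its open subposets $P_{\geq i}$ in the category of Alexandrov topological spaces. For finite $P$ it is a straightforward consequence of descent along the open cover $\{P_{\geq i}\}_{i \in P}$, but the general noetherian case requires a more careful argument, likely leveraging the cofinality of the cover $\{P_{\geq i}\}_{i \in \min(P)}$ and the Alexandrov topos structure. Secondary technical work involves verifying the limit/pullback commutation needed to assemble the strata-wise descriptions into the global cartesian square.
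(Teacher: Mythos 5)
Your proposal is correct and takes essentially the same route as the paper: reduce to the connected case via Corollary \ref{reduce to connected case}, cover a connected $P$ by the open subposets $P_{\geq i}$, apply Corollary \ref{the case P_{geq p}} to each $\B_i\simeq \Fun(P_{\geq i},\An)$, and assemble the result with Proposition \ref{connected case} by passing to the limit over $i\in P$. The only minor inaccuracy is the claim that a noetherian poset has finitely many connected components (an infinite antichain is noetherian with infinitely many components), but this is harmless because Corollary \ref{reduce to connected case} applies to products over arbitrary index sets.
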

	\begin{proof}
		By Corollary \ref{reduce to connected case},
		we are reduced to the case the topological space
		$P$ is connected.
		Write $P=\bigcup_{i\in P} P_{\geq i}$,
		then we have
		$\B\simeq \Fun(P,\An)\simeq \lim_i \Fun( P_{\geq i},\An )
		\simeq \lim_i \B_i$.
		Since $P_{\geq i}\subset P$ is open,
		the induced geometric morphism
		$\varphi_i^*: \B\simeq\Fun(P,\An)
		\simeq \Shv(P)
		\to \Shv( P_{\geq i} )\simeq
		\Fun(P_{\geq i},\An)\simeq \B_i $ is \'etale.
		By Corollary \ref{the case P_{geq p}},
		for $\B_i\simeq \Fun(P_{\geq i},\An)$,
		we have the cartesian square
		\begin{center}
			\begin{tikzcd}
				\LC_{\B_i}(\X_i)\ar[r]\ar[d,hook]
				& \prod_{p\geq i} \LC(\X_p)\ar[d,hook]\\
				\X_i \ar[r]
				& \prod_{p\geq i} \X_p.
			\end{tikzcd}
		\end{center}
		Taking the limit over $i\in P$,
		by Proposition \ref{connected case},
		we get the cartesian square
		\begin{center}
			\begin{tikzcd}
				\LC_\B(\X)\ar[d,hook] \ar[r,"\sim"] &
				\lim_i \LC_{\B_i}(\X_i)\ar[r]\ar[d,hook]
				&\lim_i \prod_{p\geq i} \LC(\X_p)
				\simeq \prod_{p\in P} \LC(\X_p)
				\ar[d,hook]\\
				\X \ar[r,"\sim"] &   \lim_i \X_i  \ar[r]
				& \lim_i\prod_{p\geq i} \X_p
				\simeq \prod_{p\in P} \X_p .
			\end{tikzcd}
		\end{center}  
	\end{proof}

	\begin{thm} 
		Suppose we are given a $P$-stratification
		$s_*:\X \to \Fun(P,\An)$.
		If $P$ is a noetherian poset,
		then there is a canonical equivalence
		$$\LC_{\Fun(P,\An)}  (\X)\stackrel{\sim}{\to} \Cons_P(\X). $$
	\end{thm}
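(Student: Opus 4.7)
The plan is to deduce this theorem essentially for free from Theorem \ref{cart for LC_B} together with the definition of $\Cons_P(\X)$. Both $\infty$-categories in question are by construction pullbacks of the same cospan, so the desired equivalence will follow from the universal property of pullbacks in $\Cat_\infty$.

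More precisely, first I would recall that $\Cons_P(\X)$ is defined by the cartesian square
\begin{center}
\begin{tikzcd}
\Cons_P(\X)\ar[r]\ar[d,hook]
& \prod_{p\in P} \LC(\X_p)\ar[d,hook]\\
\X \ar[r] &
\prod_{p\in P} \X_p
\end{tikzcd}
\end{center}
in $\Cat_\infty$, where the bottom horizontal arrow is $(i_p^*)_{p\in P}$. Next, I would invoke Theorem \ref{cart for LC_B}, which (taking $\B \simeq \Fun(P,\An)$ and using the noetherian hypothesis on $P$) asserts that the square
\begin{center}
\begin{tikzcd}
\LC_{\Fun(P,\An)}(\X)\ar[r]\ar[d,hook]
& \prod_{p\in P} \LC(\X_p)\ar[d,hook]\\
\X \ar[r] &
\prod_{p\in P} \X_p
\end{tikzcd}
\end{center}
is also cartesian, with the same bottom row. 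Since pullbacks are unique up to canonical equivalence, the universal property produces a canonical equivalence $\LC_{\Fun(P,\An)}(\X) \stackrel{\sim}{\to} \Cons_P(\X)$ compatible with the inclusions into $\X$.

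The only point that requires a remark is the identification of the two bottom horizontal arrows, i.e.\ that the map $\X \to \prod_{p\in P}\X_p$ arising from the stratification $s_*$ (via the stratum functors $i_p^*$) agrees with the map used in Theorem \ref{cart for LC_B}, which comes from the composition $\X \to \prod_p \X_p$ induced by the family $\{i_p^*:\X\to \X_p\}_{p\in P}$ through the construction of the strata in $\RTop$. These are the same by construction, so no further argument is needed.

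Since the bulk of the work has been done in Theorem \ref{cart for LC_B} (which in turn rests on Corollary \ref{the case P_{geq p}}, Corollary \ref{reduce to connected case}, and Proposition \ref{connected case}), I do not anticipate any obstacle here: the proof is essentially one line, namely ``two pullbacks of the same cospan are canonically equivalent.''
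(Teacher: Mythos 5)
Your proposal is correct and matches the paper's proof essentially verbatim: the paper likewise observes that $\Cons_P(\X)$ (by \cite[2.2.4]{HPT24}, equivalently by its defining square) and $\LC_{\Fun(P,\An)}(\X)$ (by Theorem \ref{cart for LC_B}) are pullbacks of the same cospan, and concludes by uniqueness of pullbacks in $\Cat_\infty$. No gaps.
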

	\begin{proof}
		By \cite[2.2.4]{HPT24}, the square
		\begin{center}
			\begin{tikzcd}
				\Cons_P(\X)\ar[r]\ar[d,hook]
				& \prod_{p\in P} \LC(\X_p)\ar[d,hook]\\
				\X \ar[r] &
				\prod_{p\in P} \X_p
			\end{tikzcd}
		\end{center}
		is cartesian in $\Cat_\infty$,   
		and by Theorem \ref{cart for LC_B}, the square
		\begin{center}
			\begin{tikzcd}
				\LC_{\Fun(P,\An)} (\X)\ar[r]\ar[d,hook]
				& \prod_{p\in P} \LC(\X_p)\ar[d,hook]\\
				\X \ar[r] &
				\prod_{p\in P} \X_p
			\end{tikzcd}
		\end{center}
		is cartesian in $\Cat_\infty$.  
		Thus the canonical functor
		$\LC_{\Fun(P,\An)} (\X)\to \Cons_P(\X)$
		is an equivalence.
	\end{proof}

	\small
	\bibliographystyle{amsalpha}
	\bibliography{stratified.bib} 
\end{document}